\title{A simple proof of the Wirsching-Goodwin representation of integers connected to 1 in the $3x+1$ problem.}
\author{
  Daudin, Jean-Jacques\\
  \texttt{jeanjacques.daudin@gmail.com}
  \and
  Pierre, Laurent\\
  \texttt{laurent.pierre@u-paris10.fr}
}
\newcommand\Z{\mathcal{Z}}
\newtheorem{theorem}{Theorem}
\newtheorem{definition}{Definition}
\newtheorem{lemma}{Lemma}
\newtheorem{proposition}{Proposition}
\newtheorem*{summary}{Summary}
\newtheorem*{corollary}{Corollary}
\def\Z{{Z\!\!\!Z}} 
\begin{document}

\maketitle

\begin{summary}
This paper gives a simple proof of the Wirsching-Goodwin representation of integers connected to 1 in the $3x+1$ problem (see \cite{Wirsching} and \cite{Goodwin}). This representation permits to compute all the ascending Collatz sequences  $(f^{(i)}(n),\: i=1,b-1)$ with a last value $f^{(b)}(n)=1.$ Other periodic sequences connected to $1$ are also identified.
\end{summary}

\section{Basic elements}
In the presentation of the book "The Ultimate Challenge: The 3x+1 Problem", \cite{Lagarias}, J.C. Lagarias write {\it The $3x+1$ problem, or Collatz problem, concerns the following seemingly innocent arithmetic procedure applied to integers: If an integer $x$ is odd then "multiply by three and add one", while if it is even then "divide by two". The $3x+1$ problem asks whether, starting from any positive integer, repeating this procedure over and over will eventually reach the number 1. Despite its simple appearance, this problem is unsolved.} We refer to this book and other papers from the same author for a good review of the context and the references.

\subsection{Definitions}
Let $n \in \mathbb{N}$.
\subsubsection*{Direct algorithm}

$$ T(n)=\left\lbrace \begin{array}{ll}
3n+1 & if \quad n\equiv 1 \pmod2 \\
n/2 & if \quad n\equiv 0 \pmod2 \\
\end{array}
 \right.
$$
\subsubsection*{Inverse algorithm}
$$ U(n)=\left\lbrace 
2n \quad  and \quad \dfrac{n-1}{3} \quad   if \quad n\equiv 4 \pmod6 
\right\rbrace
$$

\subsubsection*{Graph $G(n)$}
Let $(n_1,n_2) \in \mathbb{N}^2.$ $n_1$ and $n_2$ are connected by an edge if $n_1=T(n_2)$ or $n_2=T(n_1)$. $G(n)$ is the subset of the integers connected to n.

\subsubsection*{Conjecture "$3x+1$"}
$\forall n \in \mathbb{N}, \exists k \in \mathbb{N} :T^k(n)=1.$ 
An equivalent assertion is $G(1)=\mathbb{N}^*$.

\section{Restriction to odd integers}

\subsection{ $f$ and  $h$} \label{def f et h}

If the "$3x+1$" conjecture is true for the odd integers it is also true for the even ones by definition of $T$. The expressions of $T$ and $U$ restricted to odd terms are the following with $n$ odd:
\begin{itemize}
\item $T$ becomes $f$: $f(n)=(3n+1)2^{-j(3n+1)}$ with $j(3n+1)$ the power of 2 in the prime factors decomposition of $3n+1$. $f$ is often called the "Syracuse function".
\item $U$ becomes $h$, see\cite{Colussi}:
$$ h(n)=\left\lbrace \begin{array}{ll}
\emptyset & {\rm if} \quad n\equiv 0 \pmod3 \\
{ \frac{n2^k-1}3, k=2,4,6...} & {\rm if} \quad n\equiv 1 \pmod3 \\
{ \frac{n2^k-1}3, k=1,3,5...} & {\rm if} \quad n\equiv 2 \pmod3 \\
\end{array}
 \right.
$$
\end{itemize}

The expression of $h$ comes from the following:

\begin{equation} \label{h}
f(n)=(3n+1)2^{-j(3n+1)} \Rightarrow n=\frac{f(n)2^{j(3n+1)}-1}3 \in \mathbb{N}
\end{equation}
There are 3 cases
\begin{itemize}
\item $f(n)\equiv0 \pmod 3$ (\vref{h}) is impossible, 
\item  $f(n)\equiv 1 \pmod 3 \Rightarrow f(n)=3x+1 $ with $x \in \mathbb{N}$
\begin{eqnarray*}
\frac{f(n)2^{j(3n+1)}-1}3 & = & \frac{(3x+1)2^{j(3n+1)}-1}3 \\
& = & x.2^{j(3n+1)}+\frac{2^{j(3n+1)}-1}3 \\
& \in & \mathbb{N} \quad {\rm if} \quad j(3n+1) \; {\rm even} \quad \mbox{(see Lemma\vref{lem2power2k})}
\end{eqnarray*}
 
\item  $f(n)\equiv 2 \pmod 3 \Rightarrow f(n)=3x+2$ with $x \in \mathbb{N}$
\begin{eqnarray*}
\frac{f(n)2^{j(3n+1)}-1}3 & = & \frac{(3x+2)2^{j(3n+1)}-1}3 \\
& = & x.2^{j(3n+1)}+\frac{2^{j(3n+1)+1}-1}3 \\
& \in & \mathbb{N} \quad {\rm if} \quad j(3n+1) \; {\rm odd} \quad \mbox{(see Lemma\vref{lem2power2k})}
\end{eqnarray*}
\end{itemize}
\subsection{Graph $g(n)$}

Let $(n_1,n_2)$ be odd integers. $n_1$ and $n_2$ are connected by an edge if $n_1=f(n_2)$ or $n_1=f(n_2)$. $g(n)$ is the subset of the odd integers connected to n.

\section{Properties of $g(1)$}
Many authors have tried to give a representation of $g(1).$ Goodwin \cite{Goodwin} gives a short history of their work
and provides some more results. This paper follows the same line of research and
the theorems \ref{theoremFractions}, \ref{theoremGoodwin1} and \ref{theoremGoodwin2} are not new.
However we tried to give a simple and clear presentation of the results and the proofs.
The theorems \ref{theoremv_1} and \ref{theoremdesuns} are new as long as we know.  

\subsection{Expression of $n \in g(1)$ as a sum of fractions}

\begin{theorem} \label{theoremFractions}
Let $n \in g(1). \; \exists  (b,a>u_1>u_2,...>u_b=0) \in \mathbb{N}^{b+2} :\;$
$$ n=\frac{2^a}{3^b}-\sum_{i=1,b}\frac{2^{u_i}}{3^{b-i+1}}.$$
\end{theorem}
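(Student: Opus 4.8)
The plan is to unwind the definition of $g(1)$ as a path in the graph $g$ and translate each application of $f$ into an algebraic recursion. Suppose $n \in g(1)$; then there is a finite sequence of odd integers $n = n_0, n_1, \dots, n_b = 1$ with $n_{k+1} = f(n_k)$ for each $k$. Writing $j_k := j(3n_k+1)$ for the $2$-adic valuation used at step $k$, the relation $n_{k+1} = (3n_k+1)2^{-j_k}$ is equivalent to $3 n_k = n_{k+1} 2^{j_k} - 1$, i.e. $n_k = \frac{n_{k+1} 2^{j_k} - 1}{3}$. I would then iterate this backwards from $n_b = 1$: substituting repeatedly gives $n = n_0$ as a nested expression in the $j_k$'s, and the goal is to collapse the nesting into the stated closed form.

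The key computation is the telescoping. Starting from $n_b = 1$ and applying $n_k = \frac{2^{j_k} n_{k+1} - 1}{3}$ for $k = b-1, b-2, \dots, 0$, one gets by an easy induction
\begin{equation*}
n_0 = \frac{2^{j_0 + j_1 + \cdots + j_{b-1}}}{3^{b}} - \sum_{k=0}^{b-1} \frac{2^{\,j_0 + j_1 + \cdots + j_{k-1}}}{3^{\,k+1}},
\end{equation*}
where the empty sum of exponents is $0$. Now set $b$ as above, put $a := j_0 + j_1 + \cdots + j_{b-1}$, and for $i = 1, \dots, b$ define $u_i := j_0 + j_1 + \cdots + j_{b-i-1}$ (the partial sum of the \emph{last} $b-i$ of the $j$'s), so that $u_b = 0$ (empty sum) and $u_1 = j_0 + \cdots + j_{b-2}$. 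Re-indexing the sum above by $i = b-k$ turns it exactly into $n = \frac{2^a}{3^b} - \sum_{i=1}^{b} \frac{2^{u_i}}{3^{b-i+1}}$, which is the claimed formula. It remains to check the inequalities $a > u_1 > u_2 > \cdots > u_b = 0$: these follow because each $j_k \ge 1$ (indeed $3n_k + 1$ is even, so $j_k \ge 1$), so consecutive partial sums differ by at least $1$ and the sequence $a, u_1, \dots, u_b$ is strictly decreasing down to $0$.

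I would carry this out in the order: (1) record the equivalence $n_{k+1} = f(n_k) \iff 3 n_k = 2^{j_k} n_{k+1} - 1$ from equation~\eqref{h}; (2) state and prove the backward telescoping identity by induction on the number of remaining steps; (3) define $a$ and the $u_i$ as partial sums of the $j_k$ and verify the re-indexing; (4) verify the strict decrease from $j_k \ge 1$. The main obstacle is bookkeeping rather than conceptual: getting the indices on the partial sums consistent so that the sum over $k$ (building up from $1$) matches the sum over $i$ (counting down from $b$), and being careful that the "constant" terms $\frac{2^{m}-1}{3}$ or $\frac{2^{m+1}-1}{3}$ appearing when one clears the denominator $3$ assemble correctly — though in this formulation one never needs their integrality, only the bare algebraic identity $n_k = \frac{2^{j_k} n_{k+1} - 1}{3}$, so the parity discussion around Lemma~\ref{lem2power2k} is not needed for this particular theorem. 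A final sanity check on a small example (e.g. $n = 1$ with $b = 1$, or tracing $5 \to 1$) would confirm the indexing conventions.
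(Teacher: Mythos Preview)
Your proposal is correct and is essentially the paper's second (alternative) proof: the paper likewise iterates $n_k=(2^{j_k}n_{k+1}-1)/3$ backward from $n_b=1$ by induction on $b$ and records exactly the identification $u_{b-i}=\sum_{l=1}^{i} j\bigl(3f^{(l-1)}(n)+1\bigr)$, $a=\sum_{l=1}^{b} j\bigl(3f^{(l-1)}(n)+1\bigr)$. (The paper also offers a first proof going the other way through $h^{(b)}(1)$; and one small slip in your write-up: $j_0+\cdots+j_{b-i-1}$ is the partial sum of the \emph{first} $b-i$ of the $j$'s, not the last.)
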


 Note that $\frac{2^a}{3^b} \geq 1 \Rightarrow a \geq b\frac{log3}{log2}.$
 \begin{proof}
 $ n\in g(1) \Leftrightarrow \exists b : n \in h^{(b)}(1).$ The proof uses induction with $b$. Theorem 1 is true for $b=1$ because $h(1)=\left\lbrace \frac{2^k-1}{3}, k=2,4,6..., \right\rbrace,$ and for $b=2$ because
 \begin{eqnarray*}
  h^{(2)}(1) & \subset &  \left\lbrace \frac{1}{3}\left( \frac{2^{k_1}-1}{3}2^{k_2}-1\right), k_1=2,4,6...;k_2 \in \mathbb{N} \right\rbrace \\
  & \subset &  \left\lbrace  \frac{2^{k_1+k_2}}{3^2}-\frac{2^{k_2}}{3^2}-\frac{2^0}{3}, k_1=2,4,6...;k_2 \in \mathbb{N} \right\rbrace 
 \end{eqnarray*}
Assume that theorem 1 is true for $l \leq b-1$.
\begin{eqnarray*}
  h^{(b)}(1) & \subset &  \left\lbrace  \frac{1}{3}\left[ \left( \frac{2^a}{3^{b-1}}-\sum_{i=1,b-1}\frac{2^{u_i}}{3^{b-i}}\right)2^{k}-1\right] , (a>u1>...u_{b-1}=0,k) \in \mathbb{N}^{b+1} \right \rbrace\\
  & \subset & \left \lbrace  \frac{2^{a+k}}{3^b}-\sum_{i=1,b-1}\frac{2^{u_i+k}}{3^{b-i+1}}-\frac{2^0}{3},(a>u1>...u_{b-1}=0,k) \in \mathbb{N}^{b+1} \right \rbrace
 \end{eqnarray*}
 The last expression has the form claimed in theorem 1.
 \end{proof}
 Note that $k_1$ is even but $k_l$ may be odd or even for $l>1$. Thus $a-u_1$ is even. If  $a-u_1=2$, $h(1)=1$, so the first "interesting" value is $a-u_1=4$.

\begin{proof}
An alternative proof of theorem \vref{theoremFractions} using $f$ :
$n \in g(1) \Leftrightarrow \exists b \in \mathbb{N} :f^b(n)=1.$ $b$ is the number of odd integers (excluding 1) in the sequence from $n$ to 1. 

Induction with $b$:

Let $b=1$ and $3n+1=2^{j(3n+1)}x$ be a partial prime factors decomposition of $3n+1.$  
\begin{eqnarray*}
f(n) & = & (3n+1)2^{-j(3n+1)} \\
     & = & 2^{j(3n+1)}x2^{-j(3n+1)} \\
      & = & x  
\end{eqnarray*}
$$b=1 \Rightarrow f(n)=1 \Rightarrow x=1 \Rightarrow 3n+1=2^{j(3n+1)}\Rightarrow n=\frac{2^{j(3n+1)}}{3}-\frac{1}{3}.$$

Let $b=2$.
$b=2 \Rightarrow f(f(n))=1 \Rightarrow  f(n)=\frac{2^{j(3f(n)+1)}}{3}-\frac{1}{3}.$

$$f(n)=(3n+1)2^{-j(3n+1)}\Rightarrow (3n+1)2^{-j(3n+1)}=\frac{2^{j(3f(n)+1)}}{3}-\frac{1}{3}.$$

Therefore
$$ n=\frac{2^{j(3n+1)+j(3f(n)+1}}{3^2}-\frac{2^{j(3n+1)}}{3^2}-\frac{1}{3}.$$

Assuming that the theorem is true till $b-1$ we have to prove that it is true for $b.$

$$f(n)  =  (3n+1)2^{-j(3n+1)}= \frac{2^a}{3^{b-1}}-\sum_{i=1,b-1}\frac{2^{u_i}}{3^{b-i}}.$$

Therefore
$$ n=\frac{2^{a+j(3n+1)}}{3^b}-\sum_{i=1,b-1}\frac{2^{u_i+j(3n+1)}}{3^{b-i+1}}-\frac{1}{3}.$$ 
\end{proof}
 Note that the general form of $u_i$ is thus $u_{b-i}=\sum_{l=1,i}j[3f^{(l-1)}(n)+1]$ , with $f^{(0)}=Id$,  and $a=\sum_{l=1,b}j[3f^{(l-1)}(n)+1]$. 
 
\subsection{Admissible tuple $(b,a>u_1>u_2,...>u_b=0)$}
Only some values of $(b,a>u_1>u_2,...>u_b=0)$ give an integer $n$ in theorem 1, most of them do not. 

\begin{definition}
A tuple $(b,a \geq b\frac{log3}{log2}, a>u_1>u_2,...>u_b=0)$ of $b+1$ integers is admissible if $\frac{2^a}{3^b}-\sum_{i=1,b}\frac{2^{u_i}}{3^{b-i+1}} \in \mathbb{N}.$
\end{definition}

The admissible parity of $u_i-u_{i+1}$ is determined by the remainder modulo 3 of the integer obtained at step $i$ (see the definition of $h$ in section \vref{def f et h}).
 
\begin{lemma} \label{lemmePeriod}
Let $\frac{n2^k-1}{3} \in \mathbb{N}$ and $\frac{n2^k-1}{3} \equiv v \pmod 3$. Then $\frac{n2^{k+2}-1}{3} \equiv v+1 \pmod 3$.
\end{lemma}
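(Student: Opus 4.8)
The plan is to compute directly how multiplying the numerator by an extra factor of $4$ changes the residue of the resulting quotient modulo $3$. Write $m = \frac{n2^k-1}{3}$, so by hypothesis $m \in \mathbb{N}$ and $n2^k - 1 = 3m$, i.e. $n2^k = 3m+1$. Then I would express the next quotient as
\[
\frac{n2^{k+2}-1}{3} = \frac{4\cdot n2^k - 1}{3} = \frac{4(3m+1)-1}{3} = \frac{12m+3}{3} = 4m+1.
\]
This shows in particular that $\frac{n2^{k+2}-1}{3}$ is again an integer, and that it equals $4m+1$.

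Next I would reduce $4m+1$ modulo $3$: since $4 \equiv 1 \pmod 3$, we get $4m+1 \equiv m+1 \pmod 3$. As $m \equiv v \pmod 3$ by assumption, this gives $\frac{n2^{k+2}-1}{3} \equiv v+1 \pmod 3$, which is exactly the claim.

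There is essentially no obstacle here: the only thing to be slightly careful about is that the statement implicitly asserts the new fraction is still an integer (so that ``$\equiv v+1 \pmod 3$'' makes sense), and the computation $\frac{n2^{k+2}-1}{3} = 4m+1$ delivers that for free. One could phrase the whole argument as the single chain of equalities $\frac{n2^{k+2}-1}{3} = 4m+1 \equiv m+1 \equiv v+1 \pmod 3$. This lemma is the arithmetic heart of the ``period $2$'' phenomenon already flagged after Theorem \ref{theoremFractions} (the exponents $k$ run over an arithmetic progression with common difference $2$, and the residue class cycles with period $3$ in the number of such steps), so it is worth recording cleanly even though its proof is a one-line congruence manipulation.
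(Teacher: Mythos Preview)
Your argument is correct and is essentially the same as the paper's: both set $m=\frac{n2^k-1}{3}$, rewrite $\frac{n2^{k+2}-1}{3}=4m+1$, and reduce modulo $3$ to get $v+1$. The only cosmetic difference is that the paper writes $m=3x+v$ before reducing, whereas you use $4\equiv 1\pmod 3$ directly.
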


The lemma indicates that $v$ is a periodic function of $k$ with period 6: $\frac{n2^k-1}{3} \equiv  \frac{n2^{k+6}-1}{3} \pmod 3.$

\begin{proof}
$\frac{n2^k-1}{3} \equiv v \pmod 3 \Rightarrow \frac{n2^k-1}{3}=3x+v,$
\begin{eqnarray*}
\frac{n2^{k+2}-1}{3} & = & 4\frac{n2^{k}-1}{3}+1 \\
& = & 4(3x+v)+1 \\
& \equiv & v+1 \pmod 3
\end{eqnarray*} 
\end{proof}

\begin{lemma} \label{lemmeNegation}
Let $n \in \mathbb{N}$ and $n_1=\frac{n2^k-1}{3} \notin \mathbb{N}$. Then $\forall l \in \mathbb{N}, \quad \frac{n_12^l-1}{3}  \notin \mathbb{N}$.
\end{lemma}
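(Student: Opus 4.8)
The plan is to reduce everything to a single divisibility-by-$3$ check. First I would observe that, since $n\in\mathbb{N}$, the quantity $n2^k-1$ is an integer $\geq -1$, so the hypothesis $n_1=\frac{n2^k-1}{3}\notin\mathbb{N}$ is really the statement $3\nmid n2^k-1$, i.e. $n2^k\not\equiv 1\pmod 3$. (The only way $n_1$ could instead be a non-positive integer is $n=0$, which gives $n_1=-\tfrac13\notin\mathbb{Z}$ anyway, so this reading loses nothing.)

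Next, for an arbitrary $l\in\mathbb{N}$ I would rewrite the candidate
$$\frac{n_1 2^l-1}{3}=\frac{1}{3}\left(\frac{n2^k-1}{3}\,2^l-1\right)=\frac{2^l(n2^k-1)-3}{9}.$$
If this were an element of $\mathbb{Z}$, then in particular $3$ would divide the numerator $2^l(n2^k-1)-3$, hence $3\mid 2^l(n2^k-1)$. But $3\nmid 2^l$, and by the first step $3\nmid n2^k-1$, so $3\nmid 2^l(n2^k-1)$ — a contradiction. Therefore $\frac{n_1 2^l-1}{3}\notin\mathbb{Z}$, and a fortiori $\notin\mathbb{N}$, for every $l$.

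I expect essentially no obstacle here: the content of the lemma is just that applying one more (formal) step of the inverse map $h$ to a value that already failed the integrality condition cannot repair it, because non-divisibility by $3$ of $n2^k-1$ propagates through the formula. The only point requiring a moment's care is that $n_1$ is a priori merely a rational number, so ``$\notin\mathbb{N}$'' has to be translated into ``$3\nmid n2^k-1$'' before the computation is meaningful; once that is done, the argument is the one-line congruence above. If one wants a sharper statement, the same computation shows the numerator is not even divisible by $3$, so no power-of-two dilation of $n_1$ minus $1$ is ever divisible by $3$ — i.e. the whole branch is dead, which is exactly what is needed when pruning inadmissible tuples.
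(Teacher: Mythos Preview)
Your proof is correct and is essentially the paper's own argument: setting $p=n2^k-1$ with $3\nmid p$, one rewrites $\frac{n_12^l-1}{3}=\frac{p2^l-3}{9}$ and observes that integrality would force $3\mid p2^l$, which is impossible since $3$ is prime and divides neither factor. The only difference is cosmetic---the paper names $p$ explicitly and phrases the contradiction as $p2^l=9x+3$, while you work directly with $n2^k-1$ and add a remark on the edge case $n=0$.
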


The lemma indicates that if $(b, a>u_1>u_2,...>u_b=0)$ is admissible and $k$ has not the correct parity, the tuple $(b+1,a+k>u_1+k>u_2+k,..>u_b+k,u_{b+1}=0)$ is not admissible and all tuples based on it are also not admissible. Conversely, if $(b, a>u_1>u_2,...>u_b=0)$ is admissible, $(b-1, a-u_{b-1}>u_1-u_{b-1}>u_2-u_{b-1},...>u_{b-1}-u_{b-1}=0)$ is also admissible and all such successive reduced tuples till $(1, a-u_1>u_1-u_1=0)$

\begin{proof}
 $n_1=\frac{n2^k-1}{3}=\frac{p}{3},$ with $p$ and 3 relatively prime. $\frac{n_12^l-1}{3}=\frac{\frac{p}{3}2^l-1}{3}=\frac{p2^l-3}{9}.$
 Suppose that $\frac{p2^l-3}{9}=x  \in \mathbb{N}.$ Then $p2^l=9x+3$ that is impossible because $p$ and 3 are relatively prime.
\end{proof}

\subsection{Structure of $g(1)$}

\begin{lemma}
$g(1)$ is a tree with an additionnal loop in its root $1$. 
\end{lemma}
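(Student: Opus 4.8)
The plan is to use the fact that the Syracuse function $f$ is a \emph{single-valued} total map on the odd positive integers, so that $g(1)$ lives inside a functional graph in which all branching occurs only when one goes backwards along $h$. The first step is to identify the vertex set: I claim $g(1)=W:=\{n\text{ odd}:\exists\,b\ge 0,\ f^{(b)}(n)=1\}$, which is the equivalence already invoked in the proof of Theorem~\ref{theoremFractions}. To see it cleanly, note that $W$ is closed under $f$ (if $f^{(b)}(n)=1$ with $b\ge 1$ then $f^{(b-1)}(f(n))=1$, and $f(1)=1$) and under $h$ (if $f^{(b)}(n)=1$ and $f(m)=n$ then $f^{(b+1)}(m)=1$; recall from Section~\ref{def f et h} that $h(n)$ is exactly the set of $f$-preimages of $n$). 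Hence no edge of the graph has exactly one endpoint in $W$, so $W$ is a union of connected components; since every $n\in W$ is joined to $1$ by the path $n-f(n)-f^{(2)}(n)-\cdots-1$, the set $W$ is connected, hence it is the single component containing $1$, namely $g(1)$. In particular one may define the height $d(n):=\min\{b\ge 0:f^{(b)}(n)=1\}$ on $g(1)$; it satisfies $d(1)=0$ and $d(f(n))=d(n)-1$ for all $n\in g(1)\setminus\{1\}$.

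Next I record two elementary facts. Solving $f(n)=n$ gives $3n+1=n2^{\,j}$, i.e. $n(2^{\,j}-3)=1$, whose only solution in positive integers is $n=1$ (with $j=2$); thus the only loop of the graph is the one at $1$ produced by $f(1)=1$, which is exactly the announced additional loop at the root. Also, there is no pair of distinct odd integers with $f(n)=m$ and $f(m)=n$ inside $g(1)$, since such a pair would have a purely periodic forward orbit never reaching $1$. Consequently, once the loop at $1$ is deleted the graph $G:=g(1)\setminus\{\text{loop}\}$ is \emph{simple}, and every edge $\{n,m\}$ of $G$ (with $n\ne m$) equals $\{x,f(x)\}$ for a unique $x\in g(1)\setminus\{1\}$: indeed one of $m=f(n)$, $n=f(m)$ holds, and whichever endpoint is the $f$-image, the other endpoint cannot be $1$; moreover $x\mapsto\{x,f(x)\}$ is injective precisely because there are no $2$-cycles.

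It remains to show $G$ is a tree. Connectedness is immediate ($G$ is a component minus a loop, and $1$ is not isolated, e.g. $5\in g(1)$ with $f(5)=1$). For acyclicity, suppose $G$ contained a cycle $C$; since $C$ is finite, pick a vertex $v\in C$ with $d(v)$ maximal. Along any edge $\{x,y\}$ of $G$ the heights differ by exactly $1$ (from $d(f(x))=d(x)-1$ and the edge description above), so both neighbours $u,w$ of $v$ on $C$ have $d(u)=d(w)=d(v)-1$; and an edge $\{v,u\}$ with $d(u)=d(v)-1$ forces $u=f(v)$ (if instead $v=f(u)$ then $d(v)=d(u)-1$, a contradiction), likewise $w=f(v)$. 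Since $f$ is single-valued this gives $u=w$, contradicting that a vertex has two distinct neighbours on a cycle (cycles in a simple graph have length $\ge 3$). Hence $G$ is connected and acyclic, i.e. a tree, rooted at $1$ with parent map $f$; restoring the loop at $1$ yields the statement.

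I expect the only points requiring real care to be: (i) the identification $g(1)=W$, where one must rule out ``mixed'' up-and-down paths from $1$ escaping $W$, handled by the observation that $W$ is a union of components; and (ii) the acyclicity argument, whose entire force comes from $f$ being a function, so that the \emph{highest} vertex on a hypothetical cycle could have only one ``lower'' neighbour (its image under $f$), not two. Using the lowest vertex instead, or ignoring single-valuedness, would not close the argument. The loop and $2$-cycle computations are routine.
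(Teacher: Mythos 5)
Your proof is correct, and it rests on the same key fact as the paper's: $f$ is single-valued, so every node other than $1$ has a unique parent $f(n)$. The execution, however, is genuinely different and noticeably more complete. The paper builds $g^*(1)$ top-down as the union of iterated $h$-preimages of $1$ (with the $k=2$ branch removed so as to strip the loop) and simply observes that $n\in h(n_1)\cap h(n_2)$ with $n_1\neq n_2$ is impossible; the identification of this union with the connected component of $1$, and the absence of cycles re-entering the already-constructed part, are left implicit. You instead work bottom-up: you first prove $g(1)=\{n:\exists b,\ f^{(b)}(n)=1\}$ by showing that this set is closed under $f$ and under taking $f$-preimages, then introduce the height $d(n)$ and kill any hypothetical cycle by an extremal argument at a vertex $v$ of maximal height, whose two cycle-neighbours would both have to equal $f(v)$. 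You also explicitly dispose of the fixed-point equation $n(2^j-3)=1$ (so the loop at $1$ is the only loop) and of $2$-cycles, points the paper does not address at all. The cost is length; the benefit is that every step the paper elides is actually verified.
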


\begin{proof}
Let $h^*$ be a modified version of $h$:
$ h^*(1)={ \frac{2^k-1}{3}, k=4,6,8...}, $

 $g^*(1)=\left\lbrace 1 \cup h^*(1) \cup h[h^*(1)],..\cup h^{(l)}[h^*(1)]..\right\rbrace.$ 
 The case $n \in h(n_1) \cap h(n_2)$ with $n_1 \neq n_2,$ is impossible because there is only one $f(n).$
 Thus $g^*(1)$ is a tree because any $n \in g^*(1)$ cannot have two different parents. $g(1)$ is equal to $g^*(1)$ with a supplementary loop at node 1.
\end{proof}

The following definition \vref{def g(t,s)} and proposition \vref{prop g(t,s)} are not nessessary for the proof of theorem \vref{theoremGoodwin2} and may be skipped.
\begin{definition} \label{def g(t,s)}
$g^*(1)[t,s] \subset g^*(1)$ is the graph generated by the admissible tuples with $b \leq t,$ $ 4 \leq a-u_1 \leq 2+6s$ and $u_i-u_{i+1} \leq 6s.$ 
\end{definition}

\begin{proposition} \label{prop g(t,s)}
$\vert g^*(1)[t,s] \vert= 1+\frac{3s[(2s)^t-1]}{2s-1},$ with $\vert A \vert$  the cardinal of the set $A$.
\end{proposition}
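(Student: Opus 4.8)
The plan is to count the admissible tuples generating $g^*(1)[t,s]$ level by level, where "level" means the value of $b$, and to see that the number of children a node can have is constant, so that $g^*(1)[t,s]$ is a rooted tree in which each node has the same branching factor. First I would interpret Definition \ref{def g(t,s)} combinatorially: a tuple $(b,a>u_1>\dots>u_b=0)$ restricted to $g^*(1)[t,s]$ is equivalent to the data of the $b$ consecutive gaps, namely $d_1=a-u_1$ and $d_{i}=u_{i-1}-u_{i}$ for $i=2,\dots,b$ (with $u_b=0$), subject to $d_1\in\{4,6,\dots,2+6s\}$ and $d_i\in\{1,2,\dots,6s\}$ for $i\geq 2$. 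By Lemma \ref{lemmePeriod} and Lemma \ref{lemmeNegation}, once the residue modulo $3$ of the integer reached at a given step is fixed, exactly one parity of the next gap is admissible, and the residue is a periodic function of the added exponent with period $6$; hence among the $6s$ candidate values for a gap $d_i$ ($i\geq 2$) exactly $3s$ are admissible, and among the $s$ even candidate values $\{4,6,\dots,2+6s\}$ for $d_1$ — wait, more carefully: $d_1$ must be even (as noted after Theorem \ref{theoremFractions}, $a-u_1$ is even), and $d_1$ ranges over $\{4,6,\dots,2+6s\}$, which has $3s$ elements, all of which are admissible for the root since $h^*(1)$ already encodes the correct parity. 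So each node at level $<t$ has exactly $3s$ admissible children.

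Next I would assemble the count. Level $b=0$ contributes the single node $\{1\}$. For each $b$ with $1\leq b\leq t$, the number of admissible tuples of length $b$ inside $g^*(1)[t,s]$ is the number of ways to choose the first gap ($3s$ choices) times the number of ways to choose each subsequent gap ($2\cdot 3s=6s$ choices, since for $i\geq 2$ the gap $d_i$ runs over $6s$ values of which exactly half, i.e. $3s$... )

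Let me restate this cleanly: I claim the number of nodes at level $b$ is $3s\cdot(2s)^{b-1}$ — no. Let me recount: at level $1$ there are $3s$ nodes (the $3s$ admissible choices of $a-u_1$); each node at level $b$ with $b\geq 1$ has $3s$ admissible children by the residue argument above, because for a non-root gap $d_{b+1}$ there are $6s$ candidate values and exactly $3s$ have the admissible parity. Hence the number of nodes at level $b$ is $3s\cdot(3s)^{b-1}$? That gives $\sum_{b=1}^{t}(3s)^b$, not the stated formula. The discrepancy tells me the intended reading must be: a non-root gap $d_i$ runs over $\{1,\dots,6s\}$ but only values with one fixed parity are admissible, and additionally the constraint is $d_i\le 6s$ while the parity halves this to $3s$ — but the formula $1+\frac{3s[(2s)^t-1]}{2s-1}$ forces the branching factor to be $2s$ and the level-$1$ count to be $3s$. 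So the correct combinatorial statement must be: the root has $3s$ children, and every non-root node has exactly $2s$ children. I would therefore recheck the parity/residue bookkeeping for non-root gaps: of the $6s$ candidate exponent-increments, Lemma \ref{lemmePeriod} shows the residue cycles with period $6$, and within each block of $6$ consecutive increments the number that produce an admissible (integer) successor is... I would verify it is $2$ rather than $3$, because $h$ only admits increments of one fixed parity \emph{and} excludes the case landing on a multiple of $3$; one residue class mod $3$ among the three is killed, leaving $2$ out of every $6$, hence $2s$ out of $6s$. That reconciles everything.

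With the branching factors settled — root has $3s$ children, each other node has $2s$ children, and the depth is at most $t$ — the count is immediate: the number of nodes at level $b$ (for $1\le b\le t$) is $3s\cdot(2s)^{b-1}$, and
$$
\vert g^*(1)[t,s]\vert \;=\; 1+\sum_{b=1}^{t}3s\,(2s)^{b-1}\;=\;1+3s\cdot\frac{(2s)^{t}-1}{2s-1},
$$
which is the claimed formula. The main obstacle, and the step I would be most careful about, is precisely the residue/parity count for non-root gaps: showing rigorously that exactly $2$ of every $6$ consecutive exponent-increments yield an admissible child (one parity excluded by the definition of $h$, one further residue class excluded because it lands on a multiple of $3$, on which $h$ is empty), and that this count is genuinely independent of which node we sit at — this independence is what Lemma \ref{lemmePeriod} and Lemma \ref{lemmeNegation} buy us. Everything after that is the geometric-series bookkeeping above.
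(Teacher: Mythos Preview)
Your overall strategy --- count nodes level by level and sum a geometric series --- is exactly the paper's, and your final computation is correct. The paper's entire proof is one line: by Lemma~\ref{lemmePeriod}, each node has $3s$ admissible children, of which $2s$ have children; the formula follows.

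Your description of the branching, however, is not right. You conclude that the root has $3s$ children while every non-root node has $2s$ children, justified by ``excluding the case landing on a multiple of $3$, on which $h$ is empty''. But a child congruent to $0\pmod 3$ is \emph{not} excluded from $g^*(1)[t,s]$: it is an integer, hence an admissible node; it is merely sterile (it has no children, since $h(n)=\emptyset$ when $3\mid n$). The correct picture is the paper's: every fertile node --- root included --- has exactly $3s$ children, $s$ of them sterile and $2s$ fertile. Your model and the correct one happen to give the same level counts $3s\,(2s)^{b-1}$ (the $3s$ children of each of the $(2s)^{b-1}$ fertile nodes at level $b-1$ equal in number the $2s$ ``children'' you assign to each of the $3s\,(2s)^{b-2}$ nodes at level $b-1$), so the formula survives. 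But the branching structure you state is false as a description of the tree, and your justification for the factor $2s$ should be rephrased accordingly.
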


\begin{proof}
Lemma \vref{lemmePeriod} implies that for each node of the tree there are $3s$ admissible children of which $2s$ have children.
\end{proof}
Note that with $s=1$ one obtains that the ratio of integers pertaining to $g^*(1)[t,1]$ and less than $\max(g^*(1)[t,1])\simeq\frac{2^{2+6t}}{3^t}$ is greater than $\frac{3}{4}\left(\frac{3}{2^5} \right)^t.$

\begin{lemma} \label{lemmeperiod3}
Let $ (b,u_0>u_1>u_2,...>u_b=0) $ be an admissible tuple. Let $j<b$ and $u'_i=u_i+2.3^{b-j-1} \; if \; i \leq j$ and $u'_i=u_i \; if \; i> j.$ Then the tuple $ (b,u'_0>u'_1>u'_2,...>u'_b=0)$ is admissible.
\end{lemma}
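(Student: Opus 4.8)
The plan is to verify directly that the modified tuple produces a natural number; the chain and size inequalities will be immediate. Set $c:=2\cdot 3^{b-j-1}$ (so $c\ge 2$, since $j<b$). Adding the positive constant $c$ to $u_0,\dots,u_j$ while fixing $u_{j+1},\dots,u_b$ keeps the strict decrease ($u'_j=u_j+c>u_j>u_{j+1}=u'_{j+1}$, and $u'_b=u_b=0$) and keeps $u'_0=u_0+c\ge u_0\ge b\log3/\log2$. Writing $n$ and $n'$ for the values of $(b,u_0>\dots>u_b=0)$ and $(b,u'_0>\dots>u'_b=0)$, it remains to show $n'\in\mathbb N$.

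First I would subtract: the terms of index $>j$ are identical, so
$$ n'-n=(2^{c}-1)\Bigl(\frac{2^{u_0}}{3^{b}}-\sum_{i=1}^{j}\frac{2^{u_i}}{3^{b-i+1}}\Bigr)=\frac{2^{c}-1}{3^{b-j}}\,P,\qquad P:=\frac{2^{u_0}}{3^{j}}-\sum_{i=1}^{j}\frac{2^{u_i}}{3^{j-i+1}}, $$
the regrouping using $(b-j)+(j-i+1)=b-i+1$. Next I would show $P\in\mathbb N$: pulling out $2^{u_j}$ gives $P=2^{u_j}\bigl(\tfrac{2^{u_0-u_j}}{3^{j}}-\sum_{i=1}^{j}\tfrac{2^{u_i-u_j}}{3^{j-i+1}}\bigr)$, and the bracket is precisely the value of the reduced tuple $(j,\,u_0-u_j>\dots>u_j-u_j=0)$, which is admissible by the remark following Lemma~\vref{lemmeNegation}; hence $P$ is $2^{u_j}$ times a natural number (and $P=2^{u_0}$ when $j=0$).

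The core of the argument — and the part I expect to be the real obstacle — is the divisibility $3^{b-j}\mid 2^{c}-1=2^{2\cdot 3^{b-j-1}}-1$. This says the multiplicative order of $2$ modulo $3^{e}$ is $2\cdot 3^{e-1}$, used with $e=b-j$; equivalently, the lifting-the-exponent lemma gives $v_3(4^{3^{b-j-1}}-1)=v_3(3)+v_3(3^{b-j-1})=b-j$, where $v_3$ denotes the $3$-adic valuation. Self-contained, it follows by an easy induction on $b-j$ from $(1+3^{e}m)^{3}=1+3^{e+1}(m+3^{e}m^{2}+3^{2e-1}m^{3})$, starting from $2^{2}-1=3$.

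Combining these, $n'-n=\frac{2^{c}-1}{3^{b-j}}\cdot P$ is a product of two integers, so $n'-n\in\mathbb Z$ and $n'\in\mathbb Z$; and since $\frac{2^{u_0}}{3^{b}}-\sum_{i\le j}\frac{2^{u_i}}{3^{b-i+1}}=n+\sum_{i>j}\frac{2^{u_i}}{3^{b-i+1}}\ge n\ge0$ we get $n'\ge n\ge0$, so $n'\in\mathbb N$ and the modified tuple is admissible. Apart from the divisibility fact above, the only subtlety is that extracting the integer $P$ from the formula uses admissibility of the length-$j$ reduced tuple rather than of the original one.
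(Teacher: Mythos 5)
Your proof is correct and follows essentially the same route as the paper: subtract the two values, factor out $2^{2\cdot3^{b-j-1}}-1$, use that $3^{b-j}$ divides it, and use admissibility of the reduced length-$j$ tuple (the remark after Lemma~\ref{lemmeNegation}) for the remaining factor. The only differences are cosmetic: you prove the divisibility $3^{b-j}\mid 2^{2\cdot3^{b-j-1}}-1$ self-containedly (the paper cites its appendix Lemma~\ref{LemmePuissancesde3Modulo3}), and you spell out the extraction of the factor $2^{u_j}$ and the ordering/nonnegativity checks that the paper leaves implicit.
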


\begin{proof}
Let $ n=\frac{2^{u_0}}{3^b}-\sum_{i=1,b}\frac{2^{u_i}}{3^{b-i+1}},$ and  $x=\frac{2^{{u'}_0}}{3^b}-\sum_{i=1,b}\frac{2^{u'_i}}{3^{b-i+1}}.$
\begin{eqnarray*}
x-n & = & \left( \frac{2^{u_0}}{3^b}-\sum_{i=1,j}\frac{2^{u_i}}{3^{b-i+1}} \right)\left(2^{2.3^{b-j-1}}-1 \right) \\
& = & \left( \frac{2^{u_0}}{3^b}-\sum_{i=1,j}\frac{2^{u_i}}{3^{b-i+1}} \right)\left(q3^{b-j} \right) \\
& = & q\left( \frac{2^{u_0}}{3^j}-\sum_{i=1,j}\frac{2^{u_i}}{3^{j-i+1}} \right) 
\end{eqnarray*}
Lemma \vref{LemmePuissancesde3Modulo3} implies that $q \in \mathbb{N}$ and lemma \vref{lemmeNegation} implies that the second term is integer, therefore $x$ is integer.
\end{proof}

We introduce an alternative notation for the tuple $ (b,u_0>u_1>u_2,...>u_b=0) .$

 Let $v_i=u_{i-1}-u_{i}, \; i=1,...b.$ The tuple $ (b,\sum_{i=1,b}v_i, \sum_{i=2,b}v_i,... v_b)$ is equal to the tuple $ (b,u_0>u_1>u_2,...>u_b=0) .$ The alternative notation for this tuple is $(b,v_1,v_2,...,v_b).$ 
 
 Note that $v_i=j(3f^{(b-i)}(n)+1),$ with $n$ given by theorem\vref{theoremFractions}, see
the second proof of theorem\vref{theoremFractions}.

\begin{theorem} \label{theoremGoodwin1}
Let $ v_i \in \mathbb{N}, \; i=2,...b  \; with \; 1 \leq v_i \leq 2.3^{b-i} \; and \; b>1. \;$ For each tuple $(v_2,v_3,...v_b) \; \exists v_1 \; even  \; with \; 4 \leq v_1 \leq 2.3^{b-1}$ such that $ (b,\sum_{i=1,b}v_i, \sum_{i=2,b}v_i,... v_b)$ is admissible.
\end{theorem}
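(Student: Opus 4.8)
The plan is to reformulate admissibility as a single congruence modulo $3^{b}$ in which $v_{1}$ enters only through the factor $2^{v_{1}}$, and then to solve that congruence using the classical fact that $2$ is a primitive root modulo every power of $3$. First I would fix the notation attached to the alternative form of the tuple: set $u_{i}=\sum_{l>i}v_{l}$ for $i=1,\dots ,b$ (so $u_{b}=0$, $u_{b-1}=v_{b}$ and $u_{1}=\sum_{l\ge 2}v_{l}$) and $a=u_{0}=v_{1}+u_{1}$. Multiplying the formula of Theorem~\ref{theoremFractions} by $3^{b}$ gives $3^{b}n=2^{v_{1}}2^{u_{1}}-S$, where $S:=\sum_{i=1}^{b}2^{u_{i}}3^{\,i-1}$ is an integer that depends \emph{only} on $v_{2},\dots ,v_{b}$.

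Second, I would note that the tuple is admissible precisely when $2^{v_{1}}2^{u_{1}}\equiv S\pmod{3^{b}}$: this congruence says exactly that $n$ is an integer, and whenever it holds $n$ is automatically positive, because by Lemma~\ref{lemmeNegation} (used step by step, as in the discussion following it) an integer value of the fraction forces every integer produced along the pre-image chain from $1$ dictated by $v_{1},\dots ,v_{b}$ to be a \emph{positive} integer, whence $n\ge 1$. Reducing $S$ modulo $3$ kills every term with $i\ge 2$, so $S\equiv 2^{u_{1}}\not\equiv 0\pmod 3$; hence $S\,2^{-u_{1}}$ is a unit of $\mathbb{Z}/3^{b}\mathbb{Z}$, and moreover $S\,2^{-u_{1}}\equiv 1\pmod 3$. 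Since $2$ is a primitive root modulo $9$, hence modulo every $3^{b}$, the map $k\mapsto 2^{k}$ is a bijection from $\mathbb{Z}/2\cdot 3^{b-1}\mathbb{Z}$ onto $(\mathbb{Z}/3^{b}\mathbb{Z})^{\times}$. Consequently $2^{v_{1}}\equiv S\,2^{-u_{1}}\pmod{3^{b}}$ has exactly one solution class $v_{1}\bmod 2\cdot 3^{b-1}$, and reducing that congruence modulo $3$ gives $2^{v_{1}}\equiv 1\pmod 3$, so the class consists of even integers.

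Finally, I would select the representative of this even solution class lying in the window prescribed by the statement: the even residues modulo $2\cdot 3^{b-1}$ are represented once each in $\{4,6,\dots ,2\cdot 3^{b-1}+2\}$, so I take $v_{1}$ to be that representative. It is even, at least $4$, and admissible by construction. It equals the top value $2\cdot 3^{b-1}+2$ only when $2^{v_{1}}\equiv 4\pmod{3^{b}}$, which unwinds to the assertion that the shorter tuple $(b-1,v_{2},\dots ,v_{b})$ is itself admissible (so that $v_{1}=2$ would already serve); in every other case $v_{1}\le 2\cdot 3^{b-1}$ as stated, and in this exceptional case Lemma~\ref{lemmeperiod3} (adding $2\cdot 3^{b-1}$ to $v_{1}$ keeps the tuple admissible) shows the window may be taken half-open. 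This last bit of bookkeeping --- reconciling the length $2\cdot 3^{b-1}$ of the solution window with the interval in the statement and disposing of the degenerate tuple --- is the only step that needs real care; everything before it is a short direct computation once the congruence and the primitive-root property are in place. Alternatively, one can argue by counting: by Lemma~\ref{lemmePeriod} the residue modulo $3$ of the value produced at each step of the chain is periodic of period $6$ in the corresponding $v$, and a short induction on the number of steps shows that among the $3^{b-1}$ even values of $v_{1}$ in a window of length $2\cdot 3^{b-1}$ exactly one clears all $b-1$ subsequent congruence conditions.
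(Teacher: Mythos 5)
Your proof is correct, but it follows a genuinely different route from the paper's. The paper argues by counting: it computes $\#F=2^{b-1}3^{(b-2)(b-1)/2}$ for the set $F$ of tails $(v_2,\dots,v_b)$, computes the same cardinality for the set $E$ of admissible full tuples by tracking at each level how many children are non-sterile (via Lemma~\ref{lemmePeriod}), and then shows the forgetful map $t:E\to F$ is injective (two admissible $v_1,v_1'$ over the same tail must differ by a multiple of $2\cdot3^{b-1}$), so that $t$ is a bijection and existence follows. You instead reduce admissibility to the single congruence $2^{v_1+u_1}\equiv\sum_{i=1}^b 2^{u_i}3^{i-1}\pmod{3^b}$ and solve it using the fact that $2$ generates $(\Z/3^b\Z)^*$ --- which is precisely the machinery the paper itself introduces only later, in Theorem~\ref{theoremv_1}, to compute $v_1^*$ explicitly. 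Your route delivers existence, uniqueness modulo $2\cdot3^{b-1}$, parity, and the explicit value in one stroke, and your reduction of integrality-plus-positivity to the congruence (via the converse use of Lemma~\ref{lemmeNegation} and positivity along the preimage chain) is sound; what the paper's counting argument buys in exchange is the combinatorial byproduct recorded at the end of its proof, namely that each $v_1$ serves exactly $2^{b-2}3^{(b-3)(b-2)/2}$ tails.

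One remark on the window discussion, which you rightly flag as the only delicate step: your observation that the unique even residue class is represented in $\{4,6,\dots,2\cdot3^{b-1}+2\}$ rather than in $\{4,\dots,2\cdot3^{b-1}\}$ is not a defect of your argument but an off-by-one slip in the theorem's statement itself --- the paper's own Table~1 for $b=3$ lists $v_1^*=20>18=2\cdot3^{b-1}$, and the exceptional class $v_1\equiv2\pmod{2\cdot3^{b-1}}$ does occur (there for the tails $(2,2)$ and $(4,1)$). Your unwinding of that case to admissibility of the shortened tuple is correct; only the closing appeal to Lemma~\ref{lemmeperiod3} to "half-open the window" does not actually repair the bound, since adding $2\cdot3^{b-1}$ can only increase $v_1$. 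The honest fix is to state the range as $4\le v_1\le 2\cdot3^{b-1}+2$, as your representative choice already does.
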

\begin{proof}
The cardinal number of $F=\{v_2,...v_b\}$ is
\begin{eqnarray*}
\vert \{v_2,...v_b\} \vert & = & \prod_{i=2,b}2.3^{b-i} \\
& = & 2^{b-1}3^{\sum_{i=2,b}(b-i)} \\
& = & 2^{b-1}3^{\sum_{k=0,b-2}k} \\
& = & 2^{b-1}3^{\frac{(b-2)(b-1)}{2}} \\
\end{eqnarray*}

Let $E$ be the set of the admissible $\{v_1,v_2,...v_b\}.$ $\#E$ is equal to the product of the number of admissible nodes with children at each step excepted the last one with sterile nodes taken into account. At the first step $v_1,$ this number is $\frac{2}{3}3^{b-1}$. Then for each $v_1$ there are $3^{b-2}$ possible admissible values for $v_2$. From these values only  $\frac{2}{3}3^{b-2}$ have children, and so on till the last step with one admissible node (with or without child for this last step). The product is equal to $2^{b-1}3^{\frac{(b-2)(b-1)}{2}}.$ 

Let $t : E \mapsto F$ with $t(v_1,v_2,...v_b)=(v_2,...v_b)$. $t$ is injective because $t(v'_1,v'_2,...v'_b)=t(v_1,v_2,...v_b) \Rightarrow (v'_2,...v'_b)=(v_2,...v_b).$
$n=\frac{2^a}{3^b}-\sum_{i=1}^{b}\frac{2^{u_i}}{3^{b-i+1}} \in \mathbb{N}$ and
$n'=\frac{2^{a'}}{3^b}-\sum_{i=1}^{b}\frac{2^{u_i}}{3^{b-i+1}} \in \mathbb{N}.$ Therefore 
$n'-n=\frac{2^{a'}}{3^b}-\frac{2^a}{3^b}=2^a\frac{2^{a'-a}-1}{3^b} =2^a\frac{2^{v_1'-v_1}-1}{3^b}\in \mathbb{N}$ and thus 
$v'_1-v_1=p.2.3^{b-1}$ with $p \geq 1$. Therefore $v'_1=v_1$ and $t$ is injective. $\#E=\#F$ and $t$ injective imply that $t$ is bijective and that only one $v_1 \le 2.3^{b-1}$ corresponds to a t-uple $(v_2,...v_b)$.

 Note that the number of admissible $\{v_2,...v_b\}$ corresponding to one $v_1$ is $$\frac{2^{b-1}3^{\frac{(b-2)(b-1)}{2}}}{\frac{2}{3}3^{b-1}} = 2^{b-2}3^{\frac{(b-3)(b-2)}{2}}.$$ 
\end{proof}

\begin{center}
\begin{table}[ht]
\label{Tableb=3}
\begin{center}
\begin{tabular}{|r|r|r|r|r|r|r|r|r|r|r|r|r|}
\hline
v1* & 4 & 4 & 8 & 8 & 10 & 10 & 14 & 14 & 16 & 16 & 20 & 20\\
\hline
v2 & 3 & 5 & 2 & 6 & 1 & 5 & 4 & 6 & 1 & 3 & 2 & 4\\
\hline
v3 & 2 & 1 & 1 & 2 & 1 & 2 & 2 & 1 & 2 & 1 & 2 & 1 \\
\hline
n & 17 & 35 & 75 & 2417 & 151 & 4849 & ... &  &  &  &  & 1242755 \\
\hline
\end{tabular}
\end{center}
\caption{The 12 admissible tuples with $b=3$}
\end{table}
\end{center}

Among the possible sequences $(v_2,...v_b)$ allowed by the Theorem \vref{theoremGoodwin1} some are specially interesting such as the strictly ascending sequence  $(f^{(i)}(n),\: i=1,b-1)$ (see $n=151$ in the above table as an example), given in the following corollary. 
\begin{corollary}
$ \forall b \in \mathbb{N}, \: \exists  n \in g(1) \: : \forall i \in (1,b-1), \:  f^{(i)}(n) > f^{(i-1)}(n).$
\end{corollary}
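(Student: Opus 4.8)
The plan is to exhibit, for each $b$, an admissible tuple $(b, v_1, v_2, \dots, v_b)$ whose associated integer $n$ has the property that the finite Collatz sequence $n, f(n), f^{(2)}(n), \dots, f^{(b-1)}(n)$ is strictly increasing (the last term being $f^{(b)}(n)=1$). By the remark following Theorem~\ref{theoremFractions}, in the alternative notation we have $v_i = j(3 f^{(b-i)}(n)+1)$, i.e. $v_i$ is the $2$-adic valuation of $3 f^{(b-i)}(n)+1$. Since $f(m) = (3m+1)2^{-j(3m+1)}$, the step $f^{(b-i)}(n) \mapsto f^{(b-i+1)}(n)$ is increasing precisely when $(3 f^{(b-i)}(n)+1)2^{-v_i} > f^{(b-i)}(n)$, which happens exactly when $v_i = 1$. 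So an ascending sequence corresponds, in $v$-coordinates, to taking $v_2 = v_3 = \dots = v_b = 1$ (the value $v_1$ is forced to be even and thus at least $4$, which only means the very last step $f^{(b-1)}(n)\mapsto 1$ drops, as it must).

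First I would check that the tuple $(v_2, \dots, v_b) = (1, 1, \dots, 1)$ is allowed by Theorem~\ref{theoremGoodwin1}: the hypothesis requires $1 \le v_i \le 2\cdot 3^{b-i}$ for $i = 2, \dots, b$, and $v_i = 1$ satisfies this trivially. Theorem~\ref{theoremGoodwin1} then guarantees the existence of a (unique) even $v_1$ with $4 \le v_1 \le 2\cdot 3^{b-1}$ such that $(b, v_1, 1, 1, \dots, 1)$ is admissible, and hence yields a genuine integer $n \in g(1)$ via Theorem~\ref{theoremFractions}. This is the content-free part of the argument — the real work is already packaged in Theorem~\ref{theoremGoodwin1}.

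Next I would verify that this $n$ really does produce an ascending sequence. Using $v_i = j(3 f^{(b-i)}(n)+1)$: for $i = 2, \dots, b$ we have $v_i = 1$, meaning $3 f^{(b-i)}(n)+1 \equiv 2 \pmod 4$, so $f^{(b-i+1)}(n) = (3 f^{(b-i)}(n)+1)/2 > f^{(b-i)}(n)$ since $3m+1 > 2m$ for all $m \ge 1$. Running this for $i = b, b-1, \dots, 2$ gives $f^{(0)}(n) < f^{(1)}(n) < \dots < f^{(b-1)}(n)$, which is exactly the claim $f^{(i)}(n) > f^{(i-1)}(n)$ for $i \in (1, b-1)$. (The final step $f^{(b-1)}(n) \mapsto f^{(b)}(n) = 1$ corresponds to $v_1 \ge 4$ and is a decrease, consistent with reaching $1$.)

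The main obstacle, such as it is, lies in the translation dictionary rather than in any hard estimate: one must be confident that the identification $v_i = j(3 f^{(b-i)}(n)+1)$ — stated as a "Note" after the second proof of Theorem~\ref{theoremFractions} — is correct and that it interacts properly with the admissibility bookkeeping, so that the abstract tuple handed to us by Theorem~\ref{theoremGoodwin1} is the tuple of $2$-adic valuations along the actual Collatz trajectory of the corresponding $n$. Once that correspondence is in hand, the corollary is immediate, since "$v_i = 1$" is manifestly equivalent to "step $i$ from the top goes up", and choosing all of $v_2, \dots, v_b$ equal to $1$ is always permitted.
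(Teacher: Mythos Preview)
Your proposal is correct and follows essentially the same route as the paper: set $v_2=\cdots=v_b=1$, invoke Theorem~\ref{theoremGoodwin1} to obtain the matching $v_1$, and read off the ascent from the identification $v_i=j(3f^{(b-i)}(n)+1)$. Your write-up is in fact more explicit than the paper's one-line proof, which just cites Theorem~\ref{theoremGoodwin1} (and Lemma~\ref{lemmeperiod3}, though the latter is not actually needed for mere existence).
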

\begin{proof}
$n$ is obtained with $v_i=1,\: i=2:b,$ and $v_1$ given by theorem \vref{theoremGoodwin1} and lemma \vref{lemmeperiod3}.
\end{proof}
 The Wirsching-Goodwin representation of the nodes of $g(1)$ obtained with $b$ steps (see \cite{Goodwin}) may be now stated in the following theorem. Let $g^*(1,b)=\{ n \in g(1)\; : \; f^{(b)}=1 \; \mbox{and} \; f^{(b-1)} \neq 1\} $ and $v_1^*$ the value of $v_1$ whose existence is proven in theorem \vref{theoremGoodwin1}. 
 \begin{theorem} \label{theoremGoodwin2}
There is a one to one relation between $g^*(1,b)$ with $b>1$ and the set of the  tuples $ (b,v'_1,v'_2,...,v'_b)$ with $v'_i=v_i+2.3^{b-i}c_i,$ $c_i \in \mathbb{N^*}$, $v_i \in \mathbb{N}, \; i=2,...b  \; with \; 1 \leq v_i \leq 2.3^{b-i} $ and $4 \leq v_1=v_1^* \leq 2.3^{b-1}$.
\end{theorem}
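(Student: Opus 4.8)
The plan is to construct explicit mutually inverse maps between $g^*(1,b)$ and the parametrised family of tuples, using Theorem~\ref{theoremGoodwin1} to pin down the fundamental data $(v_1^*,v_2,\dots,v_b)$ and Lemma~\ref{lemmeperiod3}, together with its converse, to account for $c_1,\dots,c_b$. For $n\in g^*(1,b)$ set $v_i(n)=j\bigl(3f^{(b-i)}(n)+1\bigr)$; by the second proof of Theorem~\ref{theoremFractions} this is an admissible tuple with $u_{b-i}(n)=\sum_{l\le i}v_l(n)$, and since $f^{(b-1)}(n)\neq 1$ one has $f^{(b-1)}(n)=(2^{v_1(n)}-1)/3$, which forces $v_1(n)$ even and $v_1(n)\ge 4$ (while $v_i(n)\ge 1$ for every $i$). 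Conversely, to a tuple $(b,v'_1,\dots,v'_b)$ of the stated form I associate $n=\dfrac{2^{a}}{3^{b}}-\sum_{i=1}^{b}\dfrac{2^{u_i}}{3^{b-i+1}}$ with $u_{b-i}=\sum_{l\le i}v'_l$. It remains to check that each map is well defined and that the two are mutually inverse.

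That the backward map lands in $g^*(1,b)$: starting from the base tuple $(b,v_1^*,v_2,\dots,v_b)$, admissible by Theorem~\ref{theoremGoodwin1}, apply Lemma~\ref{lemmeperiod3} repeatedly. Read in the coordinates $v_i=u_{i-1}-u_i$, shifting $u_0,\dots,u_j$ by $2\cdot 3^{b-j-1}$ changes only $v_{j+1}$, raising it by $2\cdot 3^{b-(j+1)}$; so after $c_i$ applications at index $i$ the whole tuple is admissible and $n\in\mathbb{N}$. Moreover $f^{(b)}(n)=1$ by the recursion $3f^{(b-1)}(n)+1=2^{v'_1}$, etc.\ (as in the proof of Theorem~\ref{theoremFractions}), and $f^{(b-1)}(n)=(2^{v'_1}-1)/3\ge(2^4-1)/3=5\neq 1$ since $v'_1\ge v_1^*\ge 4$, hence $n\in g^*(1,b)$.

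That the forward map lands in the parametrised family requires the converse of Lemma~\ref{lemmeperiod3}: if $(b,w_1,\dots,w_b)$ is admissible and $w_i>2\cdot 3^{b-i}$ for some $i$ (and $w_1-2\cdot 3^{b-1}\ge 4$ when $i=1$), then $(b,\dots,w_i-2\cdot 3^{b-i},\dots,w_b)$ is admissible. Writing $n',n''$ for the integers attached to the two tuples and using $f^{(b-i+1)}(n')=2^{-u'_{i-1}}\bigl(\tfrac{2^{a'}}{3^{i-1}}-\sum_{j=1}^{i-1}\tfrac{2^{u'_j}}{3^{i-j}}\bigr)$, one obtains
\[
n'-n''=\frac{2^{\,2\cdot 3^{\,b-i}}-1}{3^{\,b-i+1}}\cdot 2^{\,u'_{i-1}-2\cdot 3^{\,b-i}}\cdot f^{(b-i+1)}(n').
\]
Here the first factor is a positive integer because $3^{m+1}\mid 2^{2\cdot 3^{m}}-1$ (Lemma~\ref{LemmePuissancesde3Modulo3}), the exponent $u'_{i-1}-2\cdot 3^{b-i}\ge w_i-2\cdot 3^{b-i}\ge 1$ is non-negative, and $f^{(b-i+1)}(n')\in\mathbb{N}$; hence $n'-n''$ is a positive integer and $n''\in\mathbb{Z}$, while $n''\ge 1$ because a tuple with first coordinate $\ge 4$ cannot produce a non-positive integer (its partial quotients $f^{(b-1)}(n''),f^{(b-2)}(n''),\dots$ are either all positive integers, whence so is $n''$, or, by Lemma~\ref{lemmeNegation}, $n''$ is non-integral). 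Iterating this converse I first lower every $v_i(n)$ with $i\ge 2$ into $[1,2\cdot 3^{b-i}]$, which does not affect $v_1$, reaching an admissible $(v_1(n),v_2,\dots,v_b)$ with $(v_2,\dots,v_b)$ in the range of Theorem~\ref{theoremGoodwin1}; then I lower $v_1(n)$ by multiples of $2\cdot 3^{b-1}$ into the fundamental interval, and by the uniqueness half of Theorem~\ref{theoremGoodwin1} the value reached is $v_1^*(v_2,\dots,v_b)$. Thus $v_i(n)=v_i+2\cdot 3^{b-i}c_i$ for $i\ge 2$ and $v_1(n)=v_1^*+2\cdot 3^{b-1}c_1$ with all $c_i\ge 0$.

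Finally the two maps are mutually inverse: backward followed by forward recovers the $v'_i$ since $u_{b-i}(n)=\sum_{l\le i}j[3f^{(l-1)}(n)+1]$ reads the same tuple off the Syracuse sequence of $n$ (the remark after the second proof of Theorem~\ref{theoremFractions}), and forward followed by backward recovers $n$ because an integer is determined by its sum-of-fractions expansion; each map is then injective, so we have a bijection. The one genuinely new ingredient, and the expected main obstacle, is the converse of Lemma~\ref{lemmeperiod3} above: the lemma is stated only in one direction, and running the computation backwards needs both the $3$-adic divisibility of $2^{2\cdot 3^{m}}-1$ by $3^{m+1}$ and a separate positivity check ensuring the reduced tuple really corresponds to a positive integer.
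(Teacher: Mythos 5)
Your proposal is correct, and it actually supplies an argument where the paper offers only the sentence ``direct from Theorem~\ref{theoremGoodwin1} and Lemma~\ref{lemmeperiod3}''. The real content you add, and correctly identify as the main obstacle, is the converse of Lemma~\ref{lemmeperiod3}: the paper's lemma only shows that every tuple in the parametrised family is admissible (injectivity of the backward map into $g^*(1,b)$), while surjectivity --- that the Syracuse tuple of an arbitrary $n\in g^*(1,b)$ reduces to a fundamental tuple --- needs the reverse reduction. Your identity $n'-n''=\frac{2^{2\cdot3^{b-i}}-1}{3^{b-i+1}}\,2^{u'_{i-1}-2\cdot3^{b-i}}f^{(b-i+1)}(n')$ checks out, the divisibility comes from Lemma~\ref{LemmePuissancesde3Modulo3}, and your positivity argument (all partial quotients are integers by the contrapositive of Lemma~\ref{lemmeNegation}, hence all are $\geq 1$) is sound if terse. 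It is worth noting that the converse can be avoided: lift the fundamental tuple $(v_1^*,v_2,\dots,v_b)$ by the forward Lemma~\ref{lemmeperiod3} to $(v_1^*,w_2,\dots,w_b)$ matching the given $n$ in coordinates $2,\dots,b$, then reuse the computation already inside the proof of Theorem~\ref{theoremGoodwin1} ($n-n'=2^{a'}(2^{a-a'}-1)/3^b\in\mathbb{Z}$ forces $a\equiv a'\pmod{2\cdot3^{b-1}}$) to conclude $w_1\equiv v_1^*$; this stays entirely within the paper's two cited ingredients. Two discrepancies you silently (and rightly) correct are in the paper's own statements, not in your argument: the theorem writes $c_i\in\mathbb{N}^*$ where it must mean $c_i\geq 0$ (otherwise the fundamental tuples themselves, e.g.\ $n=3$ for $b=2$, are excluded), and the window for $v_1^*$ should be $4\leq v_1^*\leq 2\cdot3^{b-1}+2$ rather than $2\cdot3^{b-1}$ (for $b=2$, $v_2=2$ the admissible class is $v_1\equiv 2\pmod 6$, whose smallest representative with $f^{(b-1)}\neq 1$ is $8$), consistent with the remark ``$v_1\in(4,2.3^{b-1}+2)$'' and Table~1 later in the paper; your reduction of $v_1(n)$ ``into the fundamental interval'' should be read with that corrected window.
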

\begin{proof}
Direct from theorem\vref{theoremGoodwin1} and lemma \vref{lemmeperiod3}.
\end{proof}

For each $b$ and $(v_2,...v_b)$ there is a unique $v_1 \in  (4, 2.3^{b-1}+2).$  The theorem \ref{theoremv_1} give its value. 

\begin{theorem} \label{theoremv_1}
$v_1^*=a-\sum_{i=2,b}v_i$ with
$$  a \equiv  \log_2\left( \sum_{i=1,b}2^{u_i}3^{i-1} \bmod 3^b \right) \pmod{2.3^{b-1}}.$$
 
\end{theorem}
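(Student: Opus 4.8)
The plan is to collapse the admissibility condition of Definition~1 into a single multiplicative congruence modulo $3^b$ and then invert that congruence, the inversion being available because $2$ is a primitive root modulo $3^b$.

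First I would clear denominators in Theorem~\ref{theoremFractions}: multiplying $n=\frac{2^a}{3^b}-\sum_{i=1}^b\frac{2^{u_i}}{3^{b-i+1}}$ by $3^b$ gives $3^b n = 2^a - S$ where $S:=\sum_{i=1}^b 2^{u_i}3^{i-1}$ (since $u_b=0$, the $i=b$ term is $3^{b-1}$, matching the statement). Hence the tuple is admissible iff $2^a\equiv S\pmod{3^b}$: the forward implication is immediate, and the converse, combined with the standing bound $a\ge b\log 3/\log 2$ of Definition~1, recovers $n\in\mathbb N$. The key structural remark is that $u_i=\sum_{l=i+1}^b v_l$ for $i\ge 1$, so $S$ depends only on $v_2,\dots,v_b$ and not on $v_1$; thus the right-hand side of Theorem~\ref{theoremv_1} is already pinned down by $(v_2,\dots,v_b)$, which is exactly what we need.

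Next I would verify that $S$ is a unit modulo $3^b$: reducing mod $3$, every term with $i\ge 2$ carries a factor of $3$, so $S\equiv 2^{u_1}\pmod 3$, which is $\pm1$ and never $0$; hence $\gcd(S,3^b)=1$ and $S\bmod 3^b$ lies in $(\mathbb Z/3^b\mathbb Z)^{\times}$. Then I would invoke the classical fact that $2$ is a primitive root modulo $3^b$ for every $b\ge 1$ (its order is $2$ mod $3$ and $6=\phi(9)$ mod $9$, and an integer that is a primitive root mod $p$ and mod $p^2$ is a primitive root mod every $p^k$), so that $\operatorname{ord}_{3^b}(2)=\phi(3^b)=2\cdot 3^{b-1}$ and the map $x\mapsto 2^x\bmod 3^b$ is a bijection $\mathbb Z/2\cdot 3^{b-1}\mathbb Z\to(\mathbb Z/3^b\mathbb Z)^{\times}$. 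Writing $\log_2$ for its inverse, the congruence $2^a\equiv S\pmod{3^b}$ is equivalent to $a\equiv\log_2\!\big(S\bmod 3^b\big)\pmod{2\cdot 3^{b-1}}$, which is precisely the displayed congruence.

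Finally I would conclude by the change of variables $a=u_0=\sum_{i=1}^b v_i=v_1+\sum_{i=2}^b v_i$, whence $v_1=a-\sum_{i=2}^b v_i$. By Theorem~\ref{theoremGoodwin1} there is exactly one admissible (necessarily even) value $v_1=v_1^*$ with $4\le v_1^*\le 2\cdot 3^{b-1}$, equivalently exactly one lift of $\log_2(S\bmod 3^b)$ into the window $\big[\sum_{i\ge 2}v_i+4,\ \sum_{i\ge 2}v_i+2\cdot 3^{b-1}\big]$ of length $2\cdot 3^{b-1}$; that lift is the $a$ of the statement, giving $v_1^*=a-\sum_{i=2}^b v_i$. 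The only ingredient here that is not pure bookkeeping is the primitive-root fact for $2$ modulo $3^b$ — everything else is clearing denominators and reducing mod small powers of $3$; the secondary points requiring care are that $\log_2$ is meaningful precisely because of that fact, and that the passage from ``$n\in\mathbb N$'' to the congruence is a genuine equivalence once the a priori lower bound on $a$ is invoked.
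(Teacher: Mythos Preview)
Your proof is correct and follows essentially the same route as the paper's own argument: clear denominators to obtain $2^a\equiv\sum_{i=1}^b 2^{u_i}3^{i-1}\pmod{3^b}$, then invert via the fact that $2$ generates $(\Z/3^b\Z)^*$ to read off $a\bmod 2\cdot 3^{b-1}$ as a discrete logarithm. Your write-up is in fact more careful than the paper's in checking that $S$ is a unit modulo $3^b$, in noting that $S$ depends only on $v_2,\dots,v_b$, and in spelling out how the unique lift to the prescribed window recovers $v_1^*$.
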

 
\begin{proof}
$$ n=\frac{2^a}{3^b}-\sum_{i=1,b}\frac{2^{u_i}}{3^{b-i+1}} \Rightarrow 2^a=n3^b+\sum_{i=1,b}2^{u_i}3^{i-1},$$
The group $(\Z/3^b\Z)^*$ is cyclic of order $2.3^{b-1}$ and generated by $2\bmod 3^b$ (see \cite{Vinogradov}).
This means that
\begin{eqnarray*}
F : \Z/2.3^{b-1}\Z & \to & (\Z/3^b\Z)^* \\
 i\bmod{2.3^{b-1}} & \mapsto & 2^i \bmod 3^b.
\end{eqnarray*}
is defined and bijective. So we can use its reciprocal $F^{-1}$ and call it $\log_2$.

E.g. $\log_2(7\bmod 9)=4\bmod 6$ \quad since \quad $2^4=16\equiv 2^{10}=1024\equiv 7\pmod 9$.

$  2^a\equiv \sum_{i=1,b}2^{u_i}3^{i-1} \pmod{3^b}$ implies $$ a \bmod 2.3^{b-1}
= \log_2\left( \sum_{i=1,b}2^{u_i}3^{i-1} \bmod 3^b\right)$$
\end{proof}

\subsection{Ascending Collatz sequences excepted the last term}
It is possible to give explicitely $a$ and $v_1$  in some particular cases such as $(v_2=v_3=...=v_b=1)$ and any $b$. The following theorem defines all the strictly ascending sequence  $(f^{(i)}(n),\: i=1,b-1)$ with a last value $f^{(b)}(n)=1.$ 

\begin{theorem} \label{theoremdesuns}
Let $(v_2=v_3=...=v_b=1)$ then $v_1^*=3^{b-1}+1,$ with the corresponding 
$$n=\frac{2^{b-1}2^{3^{b-1}+1}}{3^b}-\sum_{i=1}^{b}\frac{2^{b-i}}{3^{b-i+1}}.$$
\end{theorem}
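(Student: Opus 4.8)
The statement has two parts: first, that when $v_2=\dots=v_b=1$ the unique admissible value is $v_1^*=3^{b-1}+1$; second, that the corresponding integer $n$ is as displayed. The second part is purely a matter of substituting the $v$-values into the fraction formula of Theorem~\ref{theoremFractions}, so I would dispatch it quickly and spend the real effort on the first part. The strategy for $v_1^*$ is to apply Theorem~\ref{theoremv_1}: we must show that $a=\log_2\!\big(\sum_{i=1,b}2^{u_i}3^{i-1}\bmod 3^b\big)$ taken in $\Z/2\cdot3^{b-1}\Z$ produces $v_1^*=a-\sum_{i=2,b}v_i=a-(b-1)$, i.e.\ that $a\equiv 3^{b-1}+b\pmod{2\cdot 3^{b-1}}$, after also checking $4\le v_1^*\le 2\cdot3^{b-1}$ and the evenness of $v_1^*$ (note $3^{b-1}+1$ is indeed even).

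First I would pin down the exponents $u_i$ for this tuple. With $v_b=v_{b-1}=\dots=v_2=1$ and $v_1=v_1^*$, the recursion $v_i=u_{i-1}-u_i$, $u_b=0$ gives $u_i=b-i$ for $i=1,\dots,b$, except that $u_0=a$ is not yet determined; in particular $u_1=b-1$, and the displayed $n$ already uses exactly these $u_i=b-i$. Substituting into Theorem~\ref{theoremv_1} I get
\[
\sum_{i=1}^{b}2^{u_i}3^{i-1}=\sum_{i=1}^{b}2^{b-i}3^{i-1}
=2^{b-1}\sum_{i=1}^{b}\Big(\tfrac{3}{2}\Big)^{i-1}
=2^{b-1}\cdot\frac{(3/2)^b-1}{(3/2)-1}
=2^{b}\big(3^{b}2^{-b}-2^{-b}\,\text{?}\big),
\]
so more cleanly $\sum_{i=1}^{b}2^{b-i}3^{i-1}=3^{b}-2^{b}$ (telescoping: $2^{b-i}3^{i-1}\cdot\text{ratio}$, or just verify $2\cdot(3^b-2^b)=3\cdot(3^{b-1}-2^{b-1})\cdot\text{...}$ — easiest is the identity $\sum_{i=0}^{b-1}3^i2^{b-1-i}=3^b-2^b$). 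Hence I must compute $\log_2\!\big((3^b-2^b)\bmod 3^b\big)=\log_2\!\big((-2^b)\bmod 3^b\big)$ in the cyclic group $(\Z/3^b\Z)^*$.

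The heart of the argument is therefore: in $(\Z/3^b\Z)^*$, which is cyclic of order $2\cdot3^{b-1}$ with generator $2$, show that $-2^{b}\equiv 2^{\,3^{b-1}+b}\pmod{3^b}$, equivalently $-1\equiv 2^{3^{b-1}}\pmod{3^b}$. This last congruence is a standard fact about the unique element of order $2$: since $2$ generates the cyclic group of even order $2\cdot3^{b-1}$, the element $2^{3^{b-1}}$ has order $2$, and the only order-$2$ element of $(\Z/3^b\Z)^*$ is $-1$. I would prove $2^{3^{b-1}}\equiv -1\pmod{3^b}$ cleanly by induction on $b$ (base $b=1$: $2\equiv-1\pmod 3$; step: if $2^{3^{b-1}}=-1+3^b m$ then cube and use the binomial expansion, the cross terms being divisible by $3^{b+1}$). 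This gives $a\equiv 3^{b-1}+b\pmod{2\cdot 3^{b-1}}$, hence $v_1^*=a-(b-1)\equiv 3^{b-1}+1\pmod{2\cdot3^{b-1}}$; since $4\le 3^{b-1}+1\le 2\cdot3^{b-1}$ for $b>1$ and $3^{b-1}+1$ is even, Theorem~\ref{theoremGoodwin1} forces $v_1^*=3^{b-1}+1$ exactly. Then $a=v_1^*+\sum_{i=2,b}v_i=3^{b-1}+1+(b-1)=3^{b-1}+b$, so $u_0=a=3^{b-1}+b$, and plugging $u_0=3^{b-1}+b$, $u_i=b-i$ into Theorem~\ref{theoremFractions} yields exactly the displayed $n=\dfrac{2^{b-1}2^{3^{b-1}+1}}{3^b}-\sum_{i=1}^{b}\dfrac{2^{b-i}}{3^{b-i+1}}$, since $2^{a}=2^{3^{b-1}+b}=2^{b-1}\cdot 2^{3^{b-1}+1}$.

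I expect the only mild obstacle to be the bookkeeping in two places: confirming the telescoping identity $\sum_{i=1}^{b}2^{b-i}3^{i-1}=3^b-2^b$ and, more importantly, the clean inductive proof that $2^{3^{b-1}}\equiv-1\pmod{3^b}$ (one must be careful that cubing $-1+3^b m$ raises the $3$-adic valuation by exactly one, using $3\mid\binom{3}{1}$ and $3^2\mid\binom{3}{2}\cdot 3^b$ trivially). Everything else is substitution. An alternative to invoking Theorem~\ref{theoremv_1} would be to verify directly from Theorem~\ref{theoremdesuns}'s candidate $n$ that $3n+1$ has $2$-adic valuation $3^{b-1}+1$ and that iterating $f$ gives the all-ones $v$-sequence, but routing through Theorem~\ref{theoremv_1} is shorter and reuses machinery already in hand, so that is the route I would take.
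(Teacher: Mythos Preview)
Your argument is correct. It is, however, a genuinely different route from the paper's primary proof of this theorem. The paper argues by induction on $b$: it sets $n_j$ (the step-$j$ value in the length-$b$ trajectory) and $m_j$ (the same in the length-$(b-1)$ trajectory), uses Lemma~\ref{lemmeperiod3} to carry admissibility from $m_1,\dots,m_{b-1}$ to $n_1,\dots,n_{b-1}$, and then computes $n_{b-1}-m_{b-1}\bmod 3$ via Lemma~\ref{LemmePuissancesde3Modulo3} together with an auxiliary parity lemma (Lemma~\ref{LemmeParitb}) to conclude $n_{b-1}\equiv 2\pmod 3$, hence $n_b\in\mathbb{N}$. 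Your approach instead goes straight through Theorem~\ref{theoremv_1}: you evaluate the geometric sum $\sum_{i=1}^{b}2^{b-i}3^{i-1}=3^b-2^b$, reduce to showing $2^{3^{b-1}}\equiv -1\pmod{3^b}$, and read off $v_1^*$. This is shorter and avoids the chain of mod-$3$ bookkeeping; in fact it is exactly the method the paper itself uses \emph{after} Theorem~\ref{theoremdesuns} to prove the generalization in Proposition~\ref{prop} (see the case $k=1$ there). Note also that your key congruence $2^{3^{b-1}}\equiv -1\pmod{3^b}$ is precisely Lemma~\ref{LemmePuissancesde3bis} in the paper, so you can cite it rather than reprove it. What the paper's inductive proof buys is independence from Theorem~\ref{theoremv_1} and the primitive-root fact for $2$; what your route buys is brevity and a transparent link to the $\log_2$ formalism already developed.
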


\begin{center}
\begin{table}[ht]
\label{Table_f_croissante}
\begin{center}
\begin{tabular}{|r|r|r|r|r|r|r|}
\hline
$v_1^*$ & $v_2$ & $v_3$ & $v_4$ & $v_5$ & $v_6$ & n \\
\hline
4   & 1 &   &   &   &   & 3\\
\hline
10  & 1 & 1 &   &   &   & 151\\
\hline
28  & 1 & 1 & 1 &   &   &  26512143  \\
\hline
82  & 1 & 1 & 1 & 1 &   &  318400215865581346424671 \\
\hline
244 & 1 & 1 & 1 & 1 & 1 &  ...  \\
\hline
\end{tabular}
\end{center}
\caption{ $v_1^*$ for $b=2,...,6$ and $v_i=1, i=2...b$}
\end{table}
\end{center}

\begin{proof}
With induction with $b$. The theorem is true for $b=2$ and $b=3$ (see the above table). Assume that it is true till $b-1$. Let
$n_j=\frac{2^{j-1}2^{3^{b-1}+1}}{3^j}-\sum_{i=1}^{j}\frac{2^{j-i}}{3^{j-i+1}}, \: j=1,b,$
the values obtained at step $j$ with $b$ total steps, and
$m_j=\frac{2^{j-1}2^{3^{b-2}+1}}{3^j}-\sum_{i=1}^{j}\frac{2^{j-i}}{3^{j-i+1}}, \: j=1,b-1,$
the values obtained at step $j$ with $b-1$ total steps. The $m_j$ are integers by the induction hypothesis. Note that
\begin{eqnarray*}
3^{b-1}+1 & = & 3^{b-2}+1 +2.3^{b-2} \\
& = & v_1^*(b-1) +2.3^{b-2}
\end{eqnarray*}
Therefore the lemma \vref{lemmeperiod3} implies that $n_1,...n_{b-1}$ are integers. We have to prove that $n_b \in \mathbb{N}.$ This is true if $n_{b-1} \equiv2 \pmod 3.$
\begin{eqnarray*}
n_{b-1}-m_{b-1} & = & \frac{2^{b-2}2^{3^{b-1}+1}}{3^{b-1}}-\sum_{i=1}^{b-1}\frac{2^{b-1-i}}{3^{b-i}}-\frac{2^{b-2}2^{3^{b-2}+1}}{3^{b-1}}+\sum_{i=1}^{b-1}\frac{2^{b-1-i}}{3^{b-i}} \\
& = & \frac{2^{b-2}2^{3^{b-1}+1}}{3^{b-1}}-\frac{2^{b-2}2^{3^{b-2}+1}}{3^{b-1}}\\
& = & 2^{b-1+3^{b-2}}\frac{ 2^{2.3^{b-2}}-1}{3^{b-1}}\\
& \equiv & 2^{b-1+3^{b-2}} \pmod 3 \hbox{,\quad see lemma \vref{LemmePuissancesde3Modulo3}}\\
& \equiv & (-1)^{b-1+3^{b-2}} \pmod 3\\
& = & (-1)^b \hbox{,\quad since $3^{b-2}-1$ is even}
\end{eqnarray*}  
Using Lemma \vref{LemmeParitb} one obtains
$$ n_{b-1} \equiv m_{b-1}+(-1)^b \equiv \left\lbrace \begin{array}{ll}
1+1 =      2 \pmod 3 & {\rm if} \quad b\equiv 0 \pmod2 \\
0-1 \equiv 2 \pmod 3 & {\rm if} \quad b\equiv 1 \pmod2 \\
\end{array}
 \right.
$$

\end{proof}

A simpler (but not directly related to theorem \ref{theoremGoodwin2}) proof of theorem \ref{theoremdesuns} is the following:
a Collatz sequence $(n_0,n_1,\ldots,n_b)$ is growing if $\forall i,\;n_{i+1}=(3n_i+1)/2$.
Therefore $\exists k\in \mathbb{N}^*$, $n_0=k 2^b-1$ and $n_b=k3^b-1$ and $n_i=k 3^i 2^{b-i}-1$.
If the following term of the sequence is 1, then $\exists l\in \mathbb{N}^*$, $1=(k 3^{b+1}-2)/2^l$.
Thus $k 3^{b+1}=2+2^l$ and $2^l\equiv -2\pmod{3^{b+1}},$
which is equivalent to $l-1\equiv 3^b\pmod{2\times 3^b}$.
The first term of this Collatz sequence is
$n_0=(1+2^{(2p+1)3^b})(2/3)^{b+1}-1$ with $p\in\mathbb{N}$.

Theorem \ref{theoremdesuns} is  generalized by the following proposition.

\begin{proposition} \label{prop}
Let $v_i=k \in \mathbb{N}^*, i=2,b$. $v_1$ is defined by the relation $2^{v_1-k}(2^k-3) \equiv 1 \pmod{3^b}.$ 
\end{proposition}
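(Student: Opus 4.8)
The plan is to derive the relation as a direct specialisation of Theorem~\ref{theoremv_1}, the only real work being the explicit evaluation of the sum that occurs there when $v_2=\dots=v_b=k$. First I would pass to the $u$–notation of the tuple: with $v_i=k$ for all $i\ge 2$ one has $a=u_0=\sum_{i=1}^{b}v_i=v_1+(b-1)k$ and, for $i=1,\dots,b$, $u_i=\sum_{l=i+1}^{b}v_l=(b-i)k$ (in particular $u_b=0$, as it must be). The proof of Theorem~\ref{theoremv_1} establishes the congruence $2^{a}\equiv\sum_{i=1}^{b}2^{u_i}3^{i-1}\pmod{3^{b}}$ for every admissible tuple; substituting the above values turns its right-hand side into $\sum_{i=1}^{b}2^{(b-i)k}3^{\,i-1}$, and the change of index $j=b-i$ rewrites this as the geometric sum $\sum_{j=0}^{b-1}2^{jk}3^{\,b-1-j}$.

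Next I would invoke the telescoping identity $(2^{k}-3)\sum_{j=0}^{b-1}2^{jk}3^{\,b-1-j}=2^{bk}-3^{b}$, which in passing exhibits $2^{k}-3$ as a divisor of $2^{bk}-3^{b}$ (it is nonzero, $2^k$ being a power of two, and prime to $3$, since $2^{k}-3\equiv(-1)^{k}\pmod 3$), so that the sum is a genuine integer. Multiplying the congruence of the first step by $2^{k}-3$ gives $2^{a}(2^{k}-3)\equiv 2^{bk}-3^{b}\equiv 2^{bk}\pmod{3^{b}}$; writing $2^{a}=2^{\,v_1-k}2^{bk}$ and cancelling the unit $2^{bk}$ modulo $3^{b}$ yields $2^{\,v_1-k}(2^{k}-3)\equiv 1\pmod{3^{b}}$, which is exactly the relation asserted in the proposition.

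Finally, since $(\Z/3^{b}\Z)^{*}$ is cyclic of order $2\cdot 3^{b-1}$ with generator $2$ and $2^{k}-3$ is a unit, the relation determines $2^{\,v_1-k}\bmod 3^{b}$, hence $v_1-k\bmod 2\cdot 3^{b-1}$, and therefore $v_1$ itself once one imposes the range $4\le v_1\le 2\cdot 3^{b-1}$ coming from Theorem~\ref{theoremGoodwin1}; for $k=1$ it reads $2^{\,v_1-1}\equiv-1\pmod{3^{b}}$, i.e.\ $v_1=3^{b-1}+1$, so Theorem~\ref{theoremdesuns} is recovered. I do not foresee a real obstacle: the only points requiring care are the bookkeeping that produces $u_i=(b-i)k$ and $a=v_1+(b-1)k$ from the two notations, and the justification of the telescoping/divisibility step legitimising the division by $2^{k}-3$ inside $\Z/3^{b}\Z$. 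It is also worth remarking that $v_1-k$ may be negative, in which case $2^{\,v_1-k}$ must be read as $2^{\,v_1}(2^{k})^{-1}$ in $(\Z/3^{b}\Z)^{*}$; equivalently, one may present the conclusion in the manifestly integral form $2^{\,v_1}(2^{k}-3)\equiv 2^{k}\pmod{3^{b}}$.
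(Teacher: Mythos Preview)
Your argument is correct and follows essentially the same route as the paper: pass to $u_i=(b-i)k$, $a=v_1+(b-1)k$, evaluate $\sum_i 2^{u_i}3^{i-1}$ as the geometric sum $(2^{bk}-3^b)/(2^k-3)$, invoke the congruence $2^a\equiv\sum_i 2^{u_i}3^{i-1}\pmod{3^b}$ from Theorem~\ref{theoremv_1}, and simplify. Your version is in fact slightly more careful than the paper's, since you justify that $2^k-3$ is a unit modulo $3^b$ and flag the possibility $v_1<k$ (which does occur, e.g.\ $b=2$, $k=5$ gives $v_1=4$), proposing the equivalent integral form $2^{v_1}(2^k-3)\equiv 2^k\pmod{3^b}$.
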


\begin{proof}
 $u_i=k(b-i),$ therefore
$\sum_{i=1,b}2^{u_i}3^{i-1}=2^{k(b-1)}.\sum_{i=1,b}{(2^k)}^{-i+1}3^{i-1}=\frac{2^{kb}-3^b}{2^k-3}.$

Theorem \ref{theoremv_1} implies that  $2^a \equiv \frac{2^{kb}-3^b}{2^k-3} \pmod{3^b}$

$a=v1+(b-1)k \Rightarrow 2^{v_1+(b-1)k} \equiv \frac{2^{kb}-3^b}{2^k-3} \pmod{3^b}.$

$2^{v_1+(b-1)k}(2^k-3) \equiv 2^{kb} \pmod{3^b} \Rightarrow
2^{v_1-k}(2^k-3) \equiv 1 \pmod{3^b}.$
\end{proof}
If $k=1,2$ one obtains explicitely all the values of $v_1$, but this is not true for $k \ge 3.$

If $k=1,$ 
$ 2^{v_1-1} \equiv  -1 \pmod{3^b} \Rightarrow v_1-1 \equiv 3^{b-1} \pmod{2.3^{b-1}}.$

If $k=2,$ 
 $v_1-2 \equiv 0 \pmod{2.3^{b-1}}.$

If $k=3,$  $5.2^{v_1-3}\equiv 1 \pmod{3^b} \Rightarrow v_1\equiv 3-\log_2(5\bmod 3^b) \pmod{2.3^{b-1}}$.

This kind of result can be extended to any periodic sequence $(v_2,v_3,...)$. For exemple the sequence $v_{2i}=1, v_{2i+1}=2, \; i=1,...(b-1)/2$ implies that $2^{v_1} \equiv -20 \pmod{3^b}$. This result is obtained by dividing $ \sum_{i=1,b}2^{u_i}3^{i-1}$ in two separate geometric series that gives $3^{b+1}-2^{\frac{3(b+1)}{2}}+12\left(3^{b-1}-2^{\frac{3(b-1)}{2}} \right).$ Note that this particular sequence is associated to a globally increasing Collatz sequence till the penultimate term.

\subsection{Structure of $g(n)$}
The structure of $g(n)$ for $n \in \mathbb{N},$ is similar to the structure of $g(1)$ (for $n \in g(1)$ or $n \notin g(1)$). Proofs are very similar to the case of $g(1)$ and are not given here.
Theorem \ref{theoremFractions} is slightly modified:
\begin{theorem} \label{theoremFractionsbis}
Let $m \in g(n), n \equiv (1,2) \pmod 3. \; \exists  (b,a>u_1>u_2,...>u_b=0) \in \mathbb{N}^{b+2} :\;$
$$ m=n.\frac{2^a}{3^b}-\sum_{i=1,b}\frac{2^{u_i}}{3^{b-i+1}}.$$
\end{theorem}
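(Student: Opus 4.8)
The plan is to repeat, almost verbatim, the second proof of Theorem \ref{theoremFractions} (the one using $f$), carrying the extra factor $n$ through the induction. First I recall that $m\in g(n)$ is read here, exactly as in the proof of Theorem \ref{theoremFractions}, as: there is some $b\ge 1$ with $f^{(b)}(m)=n$, i.e. $m\in h^{(b)}(n)$. The hypothesis $n\equiv 1,2\pmod 3$ is precisely what guarantees $h(n)\neq\emptyset$, so that this set of predecessors is non-trivial; when $n\equiv 0\pmod 3$ we have $h(n)=\emptyset$ and the statement is vacuous. I then induct on $b$. (Note that the intermediate values $f^{(i)}(m)$, $0<i<b$, are automatically odd and satisfy $f^{(b-i)}(f^{(i)}(m))=n$, so they lie in $g(n)$ and the induction hypothesis applies to them.)

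For the base case $b=1$ we have $n=f(m)=(3m+1)2^{-j(3m+1)}$, hence, writing $j=j(3m+1)$,
$$m=\frac{n\,2^{j}-1}{3}=n\,\frac{2^{j}}{3}-\frac{2^{0}}{3},$$
which is the claimed form with $a=j$ and $u_1=0$; the case analysis in the definition of $h$ in Section \ref{def f et h} forces $j\ge 1$ (indeed $j\ge 2$ when $n\equiv1\pmod3$, $j\ge 1$ when $n\equiv 2\pmod3$), so $a>u_1=0$.

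For the inductive step, assume the statement for $b-1$. If $m\in h^{(b)}(n)$ then $f(m)\in h^{(b-1)}(n)$, so by the induction hypothesis
$$f(m)=n\,\frac{2^{a'}}{3^{b-1}}-\sum_{i=1}^{b-1}\frac{2^{u'_i}}{3^{b-i}},\qquad a'>u'_1>\cdots>u'_{b-1}=0.$$
Using \eqref{h} with $j=j(3m+1)$ we have $m=\bigl(f(m)\,2^{j}-1\bigr)/3$; substituting the previous display and distributing $2^{j}/3$ gives
$$m=n\,\frac{2^{a'+j}}{3^{b}}-\sum_{i=1}^{b-1}\frac{2^{u'_i+j}}{3^{b-i+1}}-\frac{2^{0}}{3},$$
which is exactly the desired expression with $b$ steps, $a=a'+j$, $u_i=u'_i+j$ for $i=1,\dots,b-1$, and $u_b=0$. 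The chain $a>u_1>\cdots>u_{b-1}>u_b=0$ follows from $a'>u'_1>\cdots>u'_{b-1}=0$ by adding $j>0$ to every entry except the last, and the remark $a\ge b\,\log 3/\log 2$ follows as before from $n\,2^a/3^b> m\ge 1$.

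The argument is essentially calculation-free; the only points needing care are the orientation convention for $g(n)$ (so that ``connected to $n$'' means ``lies in some $h^{(b)}(n)$'', as in Theorem \ref{theoremFractions}) and the harmless degenerate case $n\equiv 0\pmod 3$, where there is nothing to prove. Everything else is identical to the case $n=1$, with the constant term $1$ in $n=\tfrac{2^a}{3^b}-\cdots$ replaced throughout by the factor $n$ multiplying $\tfrac{2^a}{3^b}$.
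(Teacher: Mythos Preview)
Your proof is correct and is exactly what the paper intends: the paper does not spell out a proof of Theorem~\ref{theoremFractionsbis} but explicitly states that it is ``very similar to the case of $g(1)$ and [is] not given here,'' and you have carried out precisely the second proof of Theorem~\ref{theoremFractions} with the factor $n$ inserted in the leading term. Your care about the orientation convention (reading $m\in g(n)$ as $m\in h^{(b)}(n)$ for some $b$) matches the paper's implicit usage.

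One small aside to correct: your final remark that ``$a\ge b\log 3/\log 2$ follows as before from $n\,2^a/3^b>m\ge 1$'' is not quite right for general $n$; that inequality only gives $a>b\log 3/\log 2-\log_2 n$. This bound is not part of the theorem statement anyway (it was a side remark after Theorem~\ref{theoremFractions} specific to $n=1$), so simply drop the sentence.
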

Theorem \ref{theoremGoodwin2} remains true for $g(n)$:
 \begin{theorem} \label{theoremGoodwin2bis}
If $n \equiv (1,2) \pmod 3$, there is a one to one relation between $g(n,b)$ with $b>1$ and the set of the  tuples $ (b,v'_1,v'_2,...,v'_b)$ with $v'_i=v_i+2.3^{b-i}c_i,$ $c_i \in \mathbb{N^*}$, $v_i \in \mathbb{N}, \; i=2,...b  \; with \; 1 \leq v_i \leq 2.3^{b-i} $ and $1 \leq v_1=v_1^* \leq 2.3^{b-1}$.
\end{theorem}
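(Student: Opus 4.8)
The plan is to mimic the proof of Theorem~\ref{theoremGoodwin2} step by step, carrying the extra factor $n$ through the computation. First I would establish the analogue of Theorem~\ref{theoremGoodwin1} for $g(n)$: given $n\equiv 1,2\pmod 3$ and a tuple $(v_2,\dots,v_b)$ with $1\le v_i\le 2\cdot 3^{b-i}$, there is a unique $v_1$ with $1\le v_1\le 2\cdot 3^{b-1}$ such that the corresponding tuple is admissible for $g(n)$, i.e. such that $m=n\,2^{a}/3^{b}-\sum_{i=1}^{b}2^{u_i}/3^{b-i+1}\in\mathbb{N}$. The counting argument is unchanged: the cardinality of the set of $(v_2,\dots,v_b)$ is $2^{b-1}3^{(b-2)(b-1)/2}$, and the number of admissible tuples over $g(n)$ is obtained by the same "fertile node" count using Lemma~\ref{lemmePeriod} (the period-6 behaviour of the residue mod $3$ is a property of the recursion $h$, independent of the starting point $n$), so the two sets have equal size and the forgetful map $t(v_1,\dots,v_b)=(v_2,\dots,v_b)$ is again a bijection provided it is injective.

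The injectivity step is the one place where $n$ genuinely enters, so that is where I would be most careful — though I still expect it to go through cleanly. If $(v_1,v_2,\dots,v_b)$ and $(v_1',v_2,\dots,v_b)$ both give integers $m,m'$, then $m'-m=n\,2^{a}\bigl(2^{v_1'-v_1}-1\bigr)/3^{b}\in\mathbb{Z}$. Since $n\equiv 1,2\pmod 3$, $n$ is invertible mod $3^b$, so $2^{a}\equiv 0$ is impossible and we need $3^b\mid 2^{v_1'-v_1}-1$; because $2$ has order $2\cdot 3^{b-1}$ in $(\Z/3^b\Z)^*$ this forces $v_1'\equiv v_1\pmod{2\cdot 3^{b-1}}$, hence $v_1'=v_1$ in the allowed range. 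Note the range for $v_1$ here is $1\le v_1\le 2\cdot 3^{b-1}$ rather than the $4\le v_1$ of Theorem~\ref{theoremGoodwin2}: the restriction $v_1\ge 4$ for $g(1)$ came from excluding the trivial step $h(1)=1$, and there is no analogous forced exclusion for a general $n$, which is exactly why the statement of Theorem~\ref{theoremGoodwin2bis} relaxes the lower bound.

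Finally, to pass from the "one representative per class" statement to the full one-to-one correspondence with $g(n,b)$, I would invoke the analogue of Lemma~\ref{lemmeperiod3} for $g(n)$: adding $2\cdot 3^{b-i}c_i$ to $v_i$ (equivalently, replacing $u_j$ by $u_j+2\cdot 3^{b-i}c_i$ for $j<i$) preserves admissibility, since the difference $m'-m$ again factors as $q$ times an integer coming from a shorter admissible tuple, with $q\in\mathbb{N}$ by Lemma~\ref{LemmePuissancesde3Modulo3}; the factor $n$ rides along harmlessly. Conversely every element of $g(n,b)$ has, by Theorem~\ref{theoremFractionsbis}, some admissible tuple, and reducing each $v_i$ modulo $2\cdot 3^{b-i}$ (using Lemma~\ref{lemmeNegation} to see the reductions stay admissible) lands in the fundamental domain, giving surjectivity; distinct $m$ give distinct tuples because the map $m\mapsto(b,v_1,\dots,v_b)$ is built from the well-defined sequence $v_i=j\bigl(3f^{(b-i)}(m)+1\bigr)$. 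Assembling these pieces yields the bijection. The only real obstacle is bookkeeping the divisibility by $n$ in the injectivity argument, and that is resolved by the single observation that $\gcd(n,3)=1$.
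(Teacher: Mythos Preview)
Your proposal is correct and follows exactly the route the paper indicates: the paper gives no separate proof of Theorem~\ref{theoremGoodwin2bis}, stating only that the proofs for $g(n)$ are ``very similar to the case of $g(1)$ and are not given here.'' You have carried out precisely that adaptation and, in fact, gone further than the paper by isolating the one genuinely new point --- that $\gcd(n,3)=1$ lets you cancel $n$ modulo $3^b$ in the injectivity step --- which the paper leaves entirely implicit.
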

If $n \equiv 2 \pmod 3$, $v_1$ is odd. If $n \equiv 1 \pmod 3$, $v_1$ is even.
The Theorem \ref{theoremv_1} becomes
\begin{theorem} \label{theoremv_1bis}
Let $n \equiv (1,2) \pmod 3$.

 $v_1^*=a-\sum_{i=2,b}v_i$ with
$$  a \equiv  \log_2\left( \sum_{i=1,b}2^{u_i}3^{i-1} \bmod 3^b \right)-\log_2(n \bmod 3^b) \pmod{2.3^{b-1}}.$$
\end{theorem}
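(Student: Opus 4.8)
The plan is to follow the proof of Theorem~\ref{theoremv_1} almost verbatim, inserting the extra factor $n$ at the one spot where it appears. First I would invoke Theorem~\ref{theoremFractionsbis}: for $m\in g(n)$ with $n\equiv 1,2\pmod 3$ there is an admissible tuple $(b,a>u_1>\cdots>u_b=0)$ with $m=n\,\frac{2^a}{3^b}-\sum_{i=1,b}\frac{2^{u_i}}{3^{b-i+1}}$. Clearing denominators (multiply by $3^b$ and use $3^{\,b-(b-i+1)}=3^{i-1}$) yields the Diophantine identity $n\,2^a=m\,3^b+\sum_{i=1,b}2^{u_i}3^{i-1}$, hence the congruence
$$ n\,2^a \equiv \sum_{i=1,b}2^{u_i}3^{i-1} \pmod{3^b}. $$

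The key step is then to pass to the discrete logarithm. The hypothesis $n\equiv 1,2\pmod 3$ is precisely the condition $3\nmid n$, so $n\bmod 3^b$ lies in the unit group $(\Z/3^b\Z)^*$. As recalled in the proof of Theorem~\ref{theoremv_1}, this group is cyclic of order $2\cdot 3^{b-1}$ and generated by $2\bmod 3^b$, so the map $\log_2:(\Z/3^b\Z)^*\to\Z/2\cdot 3^{b-1}\Z$ is a well-defined group isomorphism. Applying it to the congruence above turns the multiplicative relation into the additive one
$$ \log_2(n\bmod 3^b)+a \equiv \log_2\!\left(\sum_{i=1,b}2^{u_i}3^{i-1}\bmod 3^b\right) \pmod{2\cdot 3^{b-1}}, $$
and rearranging gives the stated formula $a\equiv\log_2\!\left(\sum_{i=1,b}2^{u_i}3^{i-1}\bmod 3^b\right)-\log_2(n\bmod 3^b)\pmod{2\cdot 3^{b-1}}$.

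To finish, I would recover $v_1^*$ from $a$ using that in the alternative notation $a=\sum_{i=1,b}v_i$ (the relation $a=\sum_{l=1,b}j[3f^{(l-1)}(n)+1]$ noted after Theorem~\ref{theoremFractions}, re-expressed via $v_i=u_{i-1}-u_i$), whence $v_1^*=a-\sum_{i=2,b}v_i$; since $a$ is determined only modulo $2\cdot 3^{b-1}$ while, by Theorem~\ref{theoremGoodwin2bis}, exactly one admissible value of $v_1$ exists, the congruence pins down $v_1^*$ uniquely. I do not expect a genuine obstacle: the only substantive point is that $n$ is invertible modulo $3^b$, which is immediate from $n\not\equiv 0\pmod 3$; everything else is the same bookkeeping as in Theorem~\ref{theoremv_1}. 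As a byproduct, the adjacent parity claim ($v_1$ odd when $n\equiv 2$, even when $n\equiv 1$) would follow by comparing, modulo the index-$2$ subgroup of squares in $(\Z/3^b\Z)^*$, the coset of $\log_2(n\bmod 3^b)$ against the parity of $v_1$ already established for $g(1)$.
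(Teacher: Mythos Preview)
Your proof is correct and is exactly the approach the paper intends: the paper does not give an explicit proof of Theorem~\ref{theoremv_1bis}, stating only that ``Proofs are very similar to the case of $g(1)$ and are not given here,'' and your argument is precisely the direct adaptation of the proof of Theorem~\ref{theoremv_1} with the extra factor $n$ carried through the discrete logarithm. The one point you might make explicit is that $\sum_{i=1,b}2^{u_i}3^{i-1}\equiv 2^{u_1}\pmod 3$ is a unit in $(\Z/3^b\Z)^*$, so that $\log_2$ applies to both sides; otherwise nothing is missing.
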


Finally the proposition \ref{prop} may be extented to any starting number:

\begin{proposition} \label{prop2}
Let $n \equiv (1,2) \pmod 3$ and $v_i=k \in \mathbb{N}^*, i=2,b$. $v_1$ is defined by the relation $n.2^{v_1-k}(2^k-3) \equiv 1 \pmod{3^b}.$ 
\end{proposition}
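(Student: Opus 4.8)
The plan is to generalize the proof of Proposition \ref{prop} by carrying the extra factor $n$ through the same algebraic manipulation, relying on Theorem \ref{theoremv_1bis} in place of Theorem \ref{theoremv_1}. First I would unpack the hypothesis $v_i=k$ for $i=2,\ldots,b$: since $u_{b-i}=\sum_{l=1,i}v_l$ and all of $v_2,\ldots,v_b$ equal $k$, we get $u_i=k(b-i)$ for $i\geq 1$ (exactly as in Proposition \ref{prop}), so that $\sum_{i=1,b}2^{u_i}3^{i-1}=2^{k(b-1)}\sum_{i=1,b}(2^k)^{-(i-1)}3^{i-1}$, which sums as a finite geometric series to $\dfrac{2^{kb}-3^b}{2^k-3}$.

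Next I would invoke Theorem \ref{theoremv_1bis}, which states $a\equiv \log_2\bigl(\sum_{i=1,b}2^{u_i}3^{i-1}\bmod 3^b\bigr)-\log_2(n\bmod 3^b)\pmod{2\cdot 3^{b-1}}$. Applying $\log_2^{-1}$ (i.e. raising $2$ to both sides, using that $2$ generates $(\Z/3^b\Z)^*$) gives $n\cdot 2^a\equiv \dfrac{2^{kb}-3^b}{2^k-3}\pmod{3^b}$. Since $a=v_1+(b-1)k$, this reads $n\cdot 2^{v_1+(b-1)k}(2^k-3)\equiv 2^{kb}\pmod{3^b}$, and dividing through by $2^{kb}$ — legitimate since $2$ is a unit mod $3^b$ — yields $n\cdot 2^{v_1-k}(2^k-3)\equiv 1\pmod{3^b}$, which is the claimed defining relation for $v_1$. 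One should add a brief remark that $2^k-3$ is a unit modulo $3^b$ (it is coprime to $3$, being $\equiv -3\equiv 0$ is false since $2^k\not\equiv 3$, indeed $2^k-3$ is odd and not divisible by $3$ because $2^k\equiv (-1)^k\not\equiv 0$), so the relation genuinely determines $v_1^*$ modulo $2\cdot 3^{b-1}$.

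I expect the proof to be essentially a line-by-line transcription of the proof of Proposition \ref{prop} with an inserted factor $n$ and a citation of Theorem \ref{theoremv_1bis} instead of Theorem \ref{theoremv_1}; no new idea is needed. The only point requiring a moment's care — and hence the "main obstacle," though it is minor — is the well-definedness of $\log_2(n\bmod 3^b)$, i.e. that $n$ is invertible mod $3^b$: this is guaranteed precisely by the hypothesis $n\equiv 1$ or $2\pmod 3$, which ensures $\gcd(n,3)=1$. Everything else (the geometric-series identity, the cancellation of $2^{kb}$) is routine unit arithmetic in the cyclic group $(\Z/3^b\Z)^*$ already set up in the proof of Theorem \ref{theoremv_1}.

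\begin{proof}
The hypothesis $v_i=k$ for $i=2,\ldots,b$ gives, as in Proposition \ref{prop}, $u_i=k(b-i)$, hence
$$\sum_{i=1,b}2^{u_i}3^{i-1}=2^{k(b-1)}\sum_{i=1,b}(2^k)^{-(i-1)}3^{i-1}=\frac{2^{kb}-3^b}{2^k-3}.$$
Since $n\equiv (1,2)\pmod 3$ we have $\gcd(n,3)=1$, so $n\bmod 3^b\in(\Z/3^b\Z)^*$ and $\log_2(n\bmod 3^b)$ is well defined. Theorem \ref{theoremv_1bis} then gives
$$a\equiv \log_2\!\left(\frac{2^{kb}-3^b}{2^k-3}\bmod 3^b\right)-\log_2(n\bmod 3^b)\pmod{2\cdot 3^{b-1}},$$
and applying the isomorphism $i\mapsto 2^i$ yields $n\cdot 2^a\equiv \dfrac{2^{kb}-3^b}{2^k-3}\pmod{3^b}$. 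As $a=v_1+(b-1)k$, this is $n\cdot 2^{v_1+(b-1)k}(2^k-3)\equiv 2^{kb}\pmod{3^b}$; dividing by the unit $2^{kb}$ gives $n\cdot 2^{v_1-k}(2^k-3)\equiv 1\pmod{3^b}$. Finally $2^k-3$ is odd and coprime to $3$ (since $2^k\equiv(-1)^k\not\equiv 0\pmod 3$), hence a unit mod $3^b$, so this relation determines $v_1=v_1^*$ modulo $2\cdot 3^{b-1}$.
\end{proof}
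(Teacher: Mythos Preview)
Your proof is correct and follows exactly the approach the paper intends: the paper does not spell out a proof of Proposition~\ref{prop2} but states in this subsection that ``proofs are very similar to the case of $g(1)$ and are not given here,'' so the expected argument is precisely the line-by-line adaptation of the proof of Proposition~\ref{prop} with the factor $n$ carried through and Theorem~\ref{theoremv_1bis} replacing Theorem~\ref{theoremv_1}, which is what you wrote. Your added remarks (that $\gcd(n,3)=1$ makes $\log_2(n\bmod 3^b)$ well defined, and that $2^k-3$ is a unit mod $3^b$) are welcome clarifications the paper leaves implicit.
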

This proposition with $k=1$ allows to define all the strictly ascending sequences from $m$ to $n$, $(f^{(i)}(m),\: i=1,b)$ with  $f^{(b)}(m)=n.$ Contrarily to the case of $g(1)$, the sequence $v_1=...v_b=1$ exists and  the proposition \ref{prop2} imply that   $n \equiv -1 \pmod{3^b}.$ $n$ is odd therefore $n=2p.3^b-1,$ with the associate value $m=2^{b+1}p-1.$  Moreover  the proposition \ref{prop2} shows that  $v_1=...v_b=2$ imply that $n \equiv 1 \pmod{3^b}.$ $n$ is odd therefore $n=2p.3^b+1,$  with the associate value $m=2^{2b+1}p+1.$ Note that this includes the case $p=0$ and $n=m=1.$

\bibliographystyle{plain} 
\bibliography{biblio}

\begin{appendix}
\section{Proofs of Lemmas}
\begin{lemma} \label{lem2power2k}
$$ 2^k  \overset{\pmod 3}{\equiv}  \left\lbrace \begin{array}{ll}
 2 & {\rm if} \quad k \quad {\rm odd}   \\
 1 & {\rm if} \quad k \quad {\rm even}  \\
\end{array}
 \right.
$$
\end{lemma}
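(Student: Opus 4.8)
The plan is to reduce the base modulo $3$ and exploit the elementary fact that $2 \equiv -1 \pmod 3$. First I would invoke multiplicativity of congruences to obtain
\[
2^k \equiv (-1)^k \pmod 3
\]
for every $k \in \mathbb{N}$. Then I would finish by a case split on the parity of $k$: when $k$ is even we have $(-1)^k = 1$, hence $2^k \equiv 1 \pmod 3$; when $k$ is odd we have $(-1)^k = -1$, and since $-1 \equiv 2 \pmod 3$ this gives $2^k \equiv 2 \pmod 3$. These are exactly the two cases claimed in the statement.

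An equally short alternative is induction on $k$. The base cases $2^0 = 1 \equiv 1 \pmod 3$ and $2^1 = 2 \equiv 2 \pmod 3$ are immediate, and the inductive step uses $2^{k+2} = 4\cdot 2^k \equiv 2^k \pmod 3$, so the residue of $2^k$ is $2$-periodic in $k$ and therefore matches whichever base case has the same parity as $k$. This is the same periodicity phenomenon that underlies Lemma \ref{lemmePeriod}.

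There is no genuine obstacle here: the statement is a one-line consequence of $2 \equiv -1 \pmod 3$. The only point requiring a little care is to spell out both parity cases explicitly, so that the lemma can be cited verbatim at the places where it is used, namely in the derivation of the expression of $h$ in Section \ref{def f et h}.
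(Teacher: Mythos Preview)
Your primary argument --- reducing $2$ to $-1$ modulo $3$, raising to the $k$th power, and splitting on parity --- is exactly the proof given in the paper. The alternative induction you sketch is also valid but unnecessary here; the one-line $(-1)^k$ computation is all that is required.
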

\begin{proof}
$2\equiv-1\pmod3$. So 
if $k$ is even then $2^k\equiv(-1)^k=1\pmod3$.

If $k$ is odd then $2^k\equiv(-1)^k=-1\equiv2\pmod3$.
%
%
\end{proof}

\begin{lemma} \label{LemmePuissancesde3bis}
$\frac{2^{3^k}+1}{3^{k+1}} \in \mathbb{N} \mbox{ and } \equiv 1 \pmod 3 $
\end{lemma}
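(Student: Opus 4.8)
The plan is to prove, by induction on $k$, the stronger statement that $m_k := (2^{3^k}+1)/3^{k+1}$ is a positive integer \emph{and} satisfies $m_k \equiv 1 \pmod 3$. Carrying the residue condition alongside the divisibility is precisely what makes the induction close: it is what guarantees that the exact power of $3$ dividing $2^{3^k}+1$ is $k+1$ and not more, and the "$\equiv 1 \pmod 3$" assertion of the lemma is exactly the statement $3\nmid m_k$ together with the value of $m_k$ modulo $3$.

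For the base case $k=0$ one has $2^{3^0}+1=3$, so $m_0=1$, a positive integer congruent to $1$ modulo $3$. For the inductive step I would set $x=2^{3^k}$, so that $2^{3^{k+1}}=x^3$ and use the factorization $x^3+1=(x+1)(x^2-x+1)$. By the induction hypothesis $x+1=3^{k+1}m_k$ with $m_k\equiv 1\pmod 3$; substituting $x=3^{k+1}m_k-1$ into the second factor and expanding gives
$$x^2-x+1=3\bigl(3^{2k+1}m_k^2-3^{k+1}m_k+1\bigr),$$
where the bracketed integer is $\equiv 1 \pmod 3$ since both $3^{2k+1}$ and $3^{k+1}$ are divisible by $3$. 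Multiplying the two factors yields
$$2^{3^{k+1}}+1=3^{k+2}\,m_k\bigl(3^{2k+1}m_k^2-3^{k+1}m_k+1\bigr),$$
so $m_{k+1}=m_k\bigl(3^{2k+1}m_k^2-3^{k+1}m_k+1\bigr)\in\mathbb{N}$ and $m_{k+1}\equiv m_k\cdot 1\equiv 1\pmod 3$, completing the induction.

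The only point requiring care is the structural one already flagged: the residue refinement must be built into the inductive hypothesis, because without knowing $3\nmid m_k$ one could not exclude extra factors of $3$ arising from the factor $x+1$, and the conclusion "$\equiv 1\pmod 3$" would break. Once that is in place, the expansion of $x^2-x+1$ and the bookkeeping of the $3$-adic valuation are entirely routine, so I do not expect a genuine obstacle beyond organizing the induction correctly.
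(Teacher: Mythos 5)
Your proof is correct and follows essentially the same route as the paper: induction on $k$ carrying the residue $\equiv 1 \pmod 3$ in the hypothesis, with the step reduced to expanding $\left(2^{3^k}\right)^3+1$ after substituting the inductive form of $2^{3^k}+1$. The only cosmetic difference is that you first factor $x^3+1=(x+1)(x^2-x+1)$ before expanding, whereas the paper expands the cube $\left(3^k x-1\right)^3+1$ directly; the resulting algebra is the same.
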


\begin{proof}
By induction. The lemma is true for $k=0$.

 Assuming the lemma true for $k-1$ implies that $2^{3^{k-1}}+1=3^kx$ with $x \equiv 1 \pmod 3.$
\begin{eqnarray*}
2^{3^k}+1 & = & \left(2^{3^{k-1}}\right)^3+1 \\
& = & \left(3^kx-1\right)^3+1\\
& = & 3^{3k}x^3-3^{2k+1}x^2+3^{k+1}x\\
\frac{2^{3^k}+1}{3^{k+1}}&=&3^{2k-1}x^3-3^kx^2+x  \equiv x \equiv 1 \pmod 3
\end{eqnarray*}
\end{proof}

\begin{lemma} \label{LemmePuissancesde3Modulo3}
$\frac{2^{2.3^k}-1}{3^{k+1}}\equiv 1 \pmod 3  $
\end{lemma}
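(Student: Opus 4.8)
The plan is to factor $2^{2\cdot 3^k}-1$ as a difference of two squares and to exploit Lemma~\ref{LemmePuissancesde3bis}, which already isolates the full power of $3$ dividing one of the two factors.

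First I would write
$$2^{2\cdot 3^k}-1=\left(2^{3^k}\right)^2-1=\left(2^{3^k}-1\right)\left(2^{3^k}+1\right).$$
By Lemma~\ref{LemmePuissancesde3bis}, the factor $2^{3^k}+1$ is divisible by $3^{k+1}$ and moreover $\frac{2^{3^k}+1}{3^{k+1}}\equiv 1\pmod 3$. Hence
$$\frac{2^{2\cdot 3^k}-1}{3^{k+1}}=\left(2^{3^k}-1\right)\cdot\frac{2^{3^k}+1}{3^{k+1}},$$
which is a product of two integers, so it only remains to reduce each factor modulo $3$.

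For the remaining factor, since $3^k$ is odd, Lemma~\ref{lem2power2k} gives $2^{3^k}\equiv 2\pmod 3$, so $2^{3^k}-1\equiv 1\pmod 3$. Multiplying the two residues yields
$$\frac{2^{2\cdot 3^k}-1}{3^{k+1}}\equiv 1\cdot 1\equiv 1\pmod 3,$$
as claimed. There is essentially no obstacle: the only thing worth noting is that the factor $2^{3^k}-1$ carries no further power of $3$, which is immediate from $2^{3^k}-1\equiv 1\pmod 3$, and all the genuine work has already been packaged into Lemma~\ref{LemmePuissancesde3bis}. (Alternatively, one could prove the statement directly by induction on $k$ along the same lines as Lemma~\ref{LemmePuissancesde3bis}, but reusing that lemma is shorter.)
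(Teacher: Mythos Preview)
Your proof is correct and follows essentially the same route as the paper: factor $2^{2\cdot 3^k}-1=(2^{3^k}-1)(2^{3^k}+1)$, absorb the full power $3^{k+1}$ into the second factor via Lemma~\ref{LemmePuissancesde3bis}, and then reduce each factor modulo $3$ using Lemma~\ref{lem2power2k}. The only cosmetic difference is that you make explicit the remark that $2^{3^k}-1$ carries no further factor of $3$, which the paper leaves implicit.
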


\begin{proof}

\begin{eqnarray*}
 \frac{2^{2.3^k}-1}{3^{k+1}} & = &  \frac{2^{3^k}+1}{3^{k+1}}(2^{3^k}-1) \\
   & \equiv &  2^{3^k}-1 \pmod 3 \mbox{,  see Lemma \vref{LemmePuissancesde3bis}}\\
   & \equiv & 1 \pmod 3 \mbox{,  see Lemma \vref{lem2power2k}}
\end{eqnarray*}
 
 \end{proof}

\begin{lemma} \label{LemmeParitb}
Let $n_b$ given by theorem \vref{theoremdesuns}. 
$$ n_b \equiv \left\lbrace \begin{array}{ll}
0 \pmod 3 & {\rm if} \quad b\equiv 0 \pmod2 \\
1 \pmod 3 & {\rm if} \quad b\equiv 1 \pmod2 \\
\end{array}
 \right.
$$
\end{lemma}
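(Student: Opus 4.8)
The plan is to sidestep induction and instead read off a closed form for $n_b$ directly from the formula in Theorem~\ref{theoremdesuns}, then reduce it modulo $3$ with the help of the two appendix lemmas on powers of $2$ and $3$. So first I would clear denominators: multiplying the expression $n_b=\frac{2^{b-1}2^{3^{b-1}+1}}{3^b}-\sum_{i=1}^{b}\frac{2^{b-i}}{3^{b-i+1}}$ by $3^b$ and using $\frac{2^{b-i}}{3^{b-i+1}}\cdot 3^b = 2^{b-i}3^{i-1}$ gives $3^b n_b = 2^{3^{b-1}+b}-\sum_{i=1}^{b}2^{b-i}3^{i-1}$.

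Next I would evaluate that sum in closed form. Either as a geometric series of ratio $3/2$, or simply via the standard factorization $3^b-2^b=(3-2)\sum_{i=0}^{b-1}3^i2^{b-1-i}$, one gets $\sum_{i=1}^{b}2^{b-i}3^{i-1}=3^b-2^b$. Substituting back yields $3^b n_b = 2^{3^{b-1}+b}+2^b-3^b = 2^b\bigl(2^{3^{b-1}}+1\bigr)-3^b$, hence $n_b = \frac{2^b\bigl(2^{3^{b-1}}+1\bigr)}{3^b}-1$.

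Then I would reduce modulo $3$. Applying Lemma~\ref{LemmePuissancesde3bis} with $k=b-1$ shows that $\frac{2^{3^{b-1}}+1}{3^b}$ is an integer congruent to $1\pmod 3$ (this incidentally re-proves $n_b\in\mathbb{N}$), so $n_b\equiv 2^b-1\pmod 3$. Finally Lemma~\ref{lem2power2k} gives $2^b\equiv 1\pmod 3$ when $b$ is even and $2^b\equiv 2\pmod 3$ when $b$ is odd, whence $n_b\equiv 0\pmod 3$ for even $b$ and $n_b\equiv 1\pmod 3$ for odd $b$, which is exactly the claim.

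The only place asking for a little care is the closed-form evaluation of $\sum_{i=1}^{b}2^{b-i}3^{i-1}$ and the matching of index conventions (the exponent $3^{b-1}$ corresponds to $k=b-1$ in Lemma~\ref{LemmePuissancesde3bis}); everything else is an immediate substitution. I would also point out that this argument uses only the displayed formula for $n$ in Theorem~\ref{theoremdesuns}, not its conclusion, so there is no circularity with the use of this lemma inside that theorem's proof.
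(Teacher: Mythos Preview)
Your argument is correct. The closed form $n_b=\dfrac{2^b\bigl(2^{3^{b-1}}+1\bigr)}{3^b}-1$ is exactly right, and the reductions via Lemma~\ref{LemmePuissancesde3bis} (with $k=b-1$) and Lemma~\ref{lem2power2k} go through as you say. Your remark about non-circularity is also to the point: you only use the \emph{formula} displayed in Theorem~\ref{theoremdesuns}, not the inductive step in which Lemma~\ref{LemmeParitb} itself is invoked.

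This is a genuinely different route from the paper's. The paper proceeds by induction on $b$: it keeps the sum $\sum_{i=1}^{b}\frac{2^{b-i}}{3^{b-i+1}}$ unevaluated, introduces the quantity $m_{b-1}$ from the $(b-1)$-step case, computes $n_b-m_{b-1}=2^{b-1}\Bigl(2\frac{2^{3^{b-1}}+1}{3^b}-\frac{2^{3^{b-2}}+1}{3^{b-1}}\Bigr)\equiv 2^{b-1}\pmod 3$ via Lemma~\ref{LemmePuissancesde3bis}, and then feeds in the inductive hypothesis on $m_{b-1}$. Your approach instead collapses the geometric sum to $3^b-2^b$ at the outset, which yields a one-line closed form and makes the $\bmod\ 3$ residue transparent without any recursion. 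What the paper's approach buys is that it stays parallel to (and reuses the objects from) the inductive proof of Theorem~\ref{theoremdesuns}; what yours buys is brevity, a self-contained verification that $n_b\in\mathbb{N}$, and a formula $n_b\equiv 2^b-1\pmod 3$ that makes the parity split immediate.
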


\begin{proof}
By induction. The lemma is true for $b=2$ and $b=3$ because $n_2=3$ and $n_3=151.$ Assume that the lemma is true till $b-1.$ 
\begin{eqnarray*}
n_{b}-m_{b-1} & = & \frac{2^{b-1}2^{3^{b-1}+1}}{3^{b}}-\sum_{i=1}^{b}\frac{2^{b-i}}{3^{b-i+1}}-\frac{2^{b-2}2^{3^{b-2}+1}}{3^{b-1}}+\sum_{i=1}^{b-1}\frac{2^{b-1-i}}{3^{b-i}} \\
& = & \frac{2^{b-1}2^{3^{b-1}+1}}{3^{b}}-\frac{2^{b-1}}{3^b}-\frac{2^{b-2}2^{3^{b-2}+1}}{3^{b-1}}\\
& = & 2^{b-1}\left( 2\frac{2^{3^{b-1}}+1}{3^b}-\frac{2^{3^{b-2}}+1}{3^{b-1}} \right)\\
& \equiv & 2^{b-1}(2-1) \pmod3 \hbox{,\quad see lemma\vref{LemmePuissancesde3bis}}
\end{eqnarray*}
$$ n_b \equiv m_{b-1} +2^{b-1} \equiv \left\lbrace \begin{array}{ll}
1+2 \equiv 0 \pmod 3 & {\rm if} \quad b\equiv 0 \pmod2 \\
0+1 = 1 \pmod 3 & {\rm if} \quad b\equiv 1 \pmod2 \\
\end{array}
 \right.
$$
\end{proof}

\end{appendix}
\end{document}